\newcounter{commentcounter}
\newtheorem{thm}{Theorem}[section]
\newtheorem{lemma}[thm]{Lemma}
\newtheorem{prop}[thm]{Proposition}
\newtheorem{conjecture}[thm]{Conjecture}
\newtheorem{question}[thm]{Question}
\newtheorem{thmx}{Theorem}
\theoremstyle{definition}
\newtheorem{defn}[thm]{Definition}
\newtheorem{remark}[thm]{Remark}
\theoremstyle{plain}
    \newtheoremstyle{TheoremNum}
        {\topsep}{\topsep} 
        {\itshape} 
        {-0.25cm} 
        {\bfseries} 
        {.} 
        { }  
        {\thmname{#1}\thmnote{ \bfseries #3}}
    \theoremstyle{TheoremNum}
    \newtheorem{duplicate}{}
\newcommand*{\claimproofname}{My proof}
\DeclareMathOperator{\Aut}{\mathrm{Aut}}
\DeclareMathOperator{\Out}{\mathrm{Out}}
\DeclareMathOperator{\Stab}{\mathrm{Stab}}
\DeclareMathOperator{\MCG}{\mathrm{MCG}}
\DeclareMathOperator{\st}{\mathrm{st}}
\DeclareMathOperator{\lk}{\mathrm{lk}}
\newcommand{\calf}{{\mathcal{F}}}
\newcommand{\calk}{{\mathcal{K}}}
\newcommand{\calo}{{\mathcal{O}}}
\newcommand{\calp}{{\mathcal{P}}}
\def\Z{\mathbb{Z}}
\newcommand{\NN}{\mathbb{N}}
\newcommand{\ZZ}{\mathbb{Z}}
\newcommand{\RR}{\mathbb{R}}
\newcommand{\KK}{\mathbb{K}}
\tikzstyle{blackNode}=[fill=black, draw=black, shape=circle]
\title[Homology growth of mapping tori]{Homology growth of polynomially growing mapping tori}
\author{Naomi Andrew}
\author{Yassine Guerch}
\author{Sam Hughes}
\author{Monika Kudlinska}
\address[Naomi Andrew]{Mathematical Institute, Andrew Wiles Building, Observatory Quarter, University of Oxford, Oxford OX2 6GG, United Kingdom}
\address[Sam Hughes]{Mathematisches Institut, Rheinische Friedrich-Wilhelms-Universität Bonn, Endenicher Allee 60, Bonn, 53115, Germany}
\address[Monika Kudlinska]{Emmanuel College, St Andrew's Street, Cambridge CB2 3AP, United Kingdom}
\address[Yassine Guerch]{Laboratoire de Mathématiques Nicolas Oresme, CNRS, UMR 6139,
University of Caen, Normandy,
Esplanade de la Paix, CS 14032, Caen Cedex 5, France}
\email{naomi.andrew@maths.ox.ac.uk}
\email{yassine.guerch@unicaen.fr}
\email{sam.hughes.maths@gmail.com}
\email{m.kudlinska@dpmms.cam.ac.uk}
\date{\today}
\subjclass[2020]{Primary 20J05; Secondary 20E05, 20E08, 20E26, 20F28, 57M07}
\begin{document}
\begin{abstract}
We prove that residually finite mapping tori of polynomially growing automorphisms of hyperbolic groups, groups hyperbolic relative to finitely many virtually polycyclic groups, right-angled Artin groups (when the automorphism is untwisted), and right-angled Coxeter groups have the cheap rebuilding property of Abert, Bergeron, Fraczyk, and Gaboriau.  In particular, their torsion homology growth vanishes for every Farber sequence in every degree.
\end{abstract}
\maketitle

\section{Introduction}

Let $\Gamma$ be a residually finite group of type $\mathrm{F}$. By L\"{u}ck's celebrated approximation theorem, the $i^{th}$ $\ell^2$-Betti number $b_i^{(2)}(\Gamma)$ of $\Gamma$ is a measure of the growth of the $i^{th}$ homology of $\Gamma$ with rational coefficients \cite{Lueck1994}. More precisely, if $(\Gamma_k)_{k \in \mathbb{N}}$ is a descending sequence of finite index normal subgroups of $\Gamma$
such that $\bigcap_{k \in \mathbb{N}} \Gamma_k = 1$, then 
\[b_i^{(2)}(\Gamma) = \lim_{k \to \infty}\frac{\mathrm{dim}_{\mathbb{Q}}H_i(\Gamma_k, \mathbb{Q})}{[\Gamma \colon \Gamma_k]}.\]
The $\ell^2$-Betti numbers are important group invariants which have found many applications in topology and group theory (see \cite{Lueck2002} and the references therein for a comprehensive account). It is thus natural to study the growth of other homology groups associated to $\Gamma$, as well as the growth of the torsion part $|H_i(\Gamma_k, \Z)_{\rm tors}|$ of the homology.  

This paper is concerned with the growth of the mod-$p$ Betti numbers of certain groups $\Gamma$, as well as the \emph{homology torsion growth}, which is defined to be
\[t_j(\Gamma; \Gamma_k) = \limsup_{k\to \infty} \frac{\log | H_j(\Gamma_k,\mathbb Z)_{\rm tors}|}{[\Gamma:\Gamma_k]}, \]
where $(\Gamma_k)_{k \in \mathbb{N}}$ is a Farber sequence of $\Gamma$. 

These invariants have been computed in right-angled Artin groups and certain graph products \cite{AvramidiOkunSchreve2021,OkunSchreve2021} (see also \cite{FisherHughesLeary2022} where non-vanishing is proven for certain Bestvina--Brady groups). Moreover, there exist results proving vanishing of homology growth in various instances \cite{BergeronVentktesh2013,Sauer2016,KarKrophollerNikolov2017,AbertBergeronFraczykGaboriau2021}. We also mention the work of Bader--Gelander--Sauer which gives an upper bound for the homology torsion of negatively curved Riemannian manifolds of dimension at least four in terms of their volume \cite{BaderGelanderSauer2020}. 

\medskip

The main aim of this paper is to calculate homology growth for certain classes of group extensions of the form $G \rtimes_{\phi} \Z$ where $\phi \in \Aut(G)$ is \emph{polynomially growing} (see \cref{sec:growth} for the precise definition). We prove:

\begin{thmx}\label{main}
    Let $\Gamma$ be a group isomorphic to one of
    \begin{itemize}
        \item $G\rtimes_\phi \Z$ with $G$ residually finite and hyperbolic;
        \item $G\rtimes_\phi\Z$ with $G$ residually finite and hyperbolic relative to a finite collection of virtually polycyclic groups;
        \item $A_L\rtimes_\phi \Z$ where $A_L$ is a right-angled Artin group and $\phi \in \mathrm{Aut}(A_L)$ is untwisted (see \cref{sec RA*Gs}); or
        \item $W_L\rtimes_\phi \Z$ where $W_L$ is a right-angled Coxeter group.
    \end{itemize}
    If $\phi$ is polynomially growing, then for every Farber sequence $(\Gamma_k)_{k \in \mathbb N}$ of $\Gamma$, every $j\geq0$ and every field $\KK$, we have
\[\lim_{k\to \infty} \frac{\dim_{\mathbb{K}} H_j (\Gamma_k, \mathbb{K})}{[\Gamma : \Gamma_k]} = 0 \quad \mbox{and} \quad \lim_{k\to \infty} \frac{\log | H_j(\Gamma_k,\mathbb Z)_{\rm tors}|}{[\Gamma:\Gamma_k]}=0.\]
\end{thmx}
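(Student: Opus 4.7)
The plan is to establish that each of the four classes of mapping tori enjoys the cheap rebuilding property of Abert, Bergeron, Fraczyk, and Gaboriau, since their main theorem then formally implies both the vanishing of mod-$\KK$ Betti number growth and the vanishing of torsion homology growth along every Farber sequence. The task therefore reduces to verifying cheap rebuilding for $\Gamma$ in each of the four settings.

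My approach would be an induction on the polynomial degree $d$ of $\phi$. The base case $d=1$ should be handled directly: in each of the four classes, a linearly growing automorphism is expected to produce a mapping torus that is (virtually) built from an iterated extension whose successive quotients are virtually polycyclic, and such groups are known to have cheap rebuilding. Thus the base case reduces to verifying the linear-growth structure in each of the hyperbolic, relatively hyperbolic, RAAG-untwisted, and RACG settings, and invoking the extension-closure properties of cheap rebuilding established in \cite{AbertBergeronFraczykGaboriau2021}.

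For the inductive step, I would use the canonical $\phi$-invariant filtration attached to a polynomially growing automorphism. In the hyperbolic and relatively hyperbolic cases, such a filtration can be extracted from the JSJ-type decomposition of $G$ together with Levitt-style growth analysis of the action of $\phi$ on the underlying tree. For untwisted RAAG automorphisms, the filtration is provided by $\phi$-invariant parabolic subgroups via the outer-space technology of Charney, Crisp, and Vogtmann. For RACGs, one similarly filters by $\phi$-invariant reflection subgroups. In each case the restriction of $\phi$ to a proper invariant subgroup should have strictly smaller polynomial degree, and the induced automorphism on the quotient should be linearly growing. The short exact sequence
\[
1 \longrightarrow N \rtimes_\phi \ZZ \longrightarrow \Gamma \longrightarrow Q \longrightarrow 1
\]
(or a closely related finite-index variant) then allows cheap rebuilding for $\Gamma$ to be deduced from cheap rebuilding for the two smaller pieces via the permanence results of \cite{AbertBergeronFraczykGaboriau2021}.

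The main obstacle, I expect, will be executing the filtration argument uniformly across the four classes, because the very meaning of \emph{polynomial growth} and the structure of the canonical invariant filtration differ considerably between them. The relatively hyperbolic case in particular requires careful control over how $\phi$ interacts with the virtually polycyclic peripheral subgroups, and the RAAG case requires reducing a general untwisted polynomially growing automorphism to a product of manageable elementary moves. A secondary technical difficulty is that cheap rebuilding is a quantitative property along Farber sequences, so the filtrations and extension arguments must be compatible with passage to finite-index subgroups in a way that uniformly controls the rebuilding constants; I expect this to be where most of the actual work lies.
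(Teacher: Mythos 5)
Your reduction to the cheap rebuilding property via \cite[Theorem~10.20]{AbertBergeronFraczykGaboriau2021} is correct and matches the paper. However, the proposed induction on the polynomial degree $d$ of $\phi$ is not what the paper does, and, more importantly, it has concrete gaps that I do not see how to repair.

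First, the base case is wrong. You assert that a linearly growing automorphism produces a mapping torus ``(virtually) built from an iterated extension whose successive quotients are virtually polycyclic.'' This fails already in the simplest example: the identity on $F_2$ is polynomially (indeed constantly) growing, yet $F_2 \times \Z$ is not virtually an iterated extension of virtually polycyclic groups, since it contains non-abelian free subgroups. Similarly a single Dehn twist on a closed surface grows linearly, but the resulting 3-manifold group is a graph manifold group, again containing non-abelian free subgroups. So even the $d\le 1$ case requires a genuine geometric argument (in the paper, an action on a Bass--Serre or JSJ tree with controlled stabilisers), not an extension-closure argument.

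Second, the inductive step presupposes a canonical $\phi$-invariant filtration of $G$ with the properties that (a) $\phi$ restricted to each stage has strictly smaller polynomial degree and (b) the induced map on quotients is linearly growing. No such filtration is constructed or cited, and I don't believe one exists in this generality for hyperbolic or relatively hyperbolic groups. What the paper actually uses is quite different: for infinitely-ended $G$ it proves a combination theorem (\Cref{Thm:crpfreeproduct}) by inducting on \emph{Grushko rank}, exploiting that a polynomially growing $\Phi$ virtually preserves a sporadic free factor system (\Cref{Prop:existenceBStree}); for one-ended relatively hyperbolic $G$ it uses the Guirardel--Levitt JSJ tree, which is automatically $\Aut(G;\calp)$-invariant, and handles vertex stabilisers by a case analysis (surface, polycyclic, or rigid) rather than by any reduction in degree; and for RAAGs/RACGs it inducts on the number of vertices of $L$ using Fioravanti's coarse-median-preserving splittings, not the Charney--Crisp--Vogtmann outer space technology you cite. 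In each case the relevant tree or complex is acted on by the full mapping torus $G\rtimes_\phi\Z$, and the conclusion follows from \Cref{thm:Crprebuild} applied to that action. Your framing in terms of a short exact sequence $1\to N\rtimes_\phi\Z\to\Gamma\to Q\to1$ and permanence under extensions does not capture this: the paper's argument is fundamentally about actions on trees and products of trees, not about normal filtrations with small quotients.

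Finally, you anticipate difficulty ``uniformly controlling the rebuilding constants'' across finite-index passage. This is a red herring: passage to finite index is handled once and for all by \Cref{prop:Crpprop}~\eqref{prop:Crpprop:1}, and \Cref{thm:Crprebuild} absorbs the quantitative bookkeeping. The genuine difficulty, which your proposal does not address, is producing a $\phi$-invariant (up to power) splitting with stabilisers whose mapping tori one can control inductively --- and this requires the specific structure theory (sporadic free factor systems, canonical JSJ trees, Fioravanti's amalgam decompositions), none of which is organised by polynomial degree.
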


We note that in the case where $G$ is a finite rank free group and $\phi \in \mathrm{Aut}(G)$ a polynomially-growing automorphism, the result was already known by previous work of the first, third and fourth author \cite{AndrewHughesKudlinska2022}. 

\medskip

The key tool in proving \cref{main} is the \emph{cheap rebuilding property} developed by Abert--Bergeron--Fraczyk--Gaboriau in \cite{AbertBergeronFraczykGaboriau2021}, which implies vanishing of homology torsion growth in residually finite groups. Crucially for us, a group $\Gamma$ has the cheap rebuilding property whenever it acts co-compactly on a contractible CW-complex with stabilisers which satisfy the cheap rebuilding property (see \cref{sec:CRP} for more details, and specifically \cref{thm:Crprebuild}). 

We prove that the groups considered in \cref{main} all satisfy the cheap rebuilding property. We do so by finding $\phi$-invariant splittings of $G$, which induce splittings of the extension $G \rtimes_{\phi} \Z$. In the one-ended relatively hyperbolic case, we deploy the theory of JSJ decompositions \cite{guirardellevitt2017jsj} following Guirardel--Levitt. In the case of right-angled Artin and Coxeter groups, we use the work of Fioravanti on coarse median preserving automorphisms of those groups \cite{Fioravanti22}.

For the case when the group $G$ is infinitely ended, we prove a combination theorem for the cheap rebuilding property of the mapping torus $G \rtimes_{\phi} \Z$ of a polynomially-growing automorphism $\phi \in \mathrm{Aut}(G)$: 

\medskip

\begin{duplicate}[\Cref{Thm:crpfreeproduct}]
    Let $G=G_1 \ast \ldots \ast G_k \ast F_N$ be a free product of residually finite groups. Fix $\alpha \in \NN$. Let $\phi$ be a polynomially-growing automorphism of $G$ which preserves the conjugacy classes of the factors $G_i$. Suppose that for every $i\in \{1,\ldots,k\}$, the group $G_i \rtimes_{\phi|_{G_i}} \ZZ$ has the cheap $\alpha$-rebuilding property. Then the group $G \rtimes_\phi \ZZ$ has the cheap $\alpha$-rebuilding property.
\end{duplicate}

\medskip

We note that \cref{Thm:crpfreeproduct} recovers the main theorem from \cite{AndrewHughesKudlinska2022} on the cheap rebuilding property for mapping tori of polynomially-growing outer automorphisms of finite rank free groups.

The key observation in the proof of \Cref{Thm:crpfreeproduct} is that if a polynomially-growing automorphism preserves a free product decomposition, then after possibly passing to a power it preserves a \emph{sporadic} free factor system (see \cref{Prop:existenceBStree}). Hence, it preserves a $G$-tree $T$ which is the Bass--Serre tree corresponding to the sporadic free factor system.

\subsection{The \texorpdfstring{$\ell^2$}{l2}-torsion conjecture}

The \emph{integral torsion} $\rho^{\Z}(\Gamma)$ of a group $\Gamma$ is defined to be the sum of the homology torsion gradients,
\[\rho^{\Z}(\Gamma) := \sum_{j \geq 0} t_j(\Gamma; \Gamma_k).\]  An important conjecture of L\"{u}ck relates the integral torsion $\rho^\Z(\Gamma)$ with its $\ell^2$-torsion $\rho^{(2)}(\Gamma)$:
\begin{conjecture}[{\cite[Conjecture~1.11(3)]{Lueck2013}}] Let $\Gamma$ be an infinite residually finite $\ell^2$-acyclic group of type $\mathrm{VF}$. Then, $\rho^{(2)}(\Gamma) = \rho^{\Z}(\Gamma)$. \end{conjecture}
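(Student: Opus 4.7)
The full conjecture is wide open, so the realistic target is to verify it for the class of groups $\Gamma$ to which \cref{main} applies. \cref{main} already yields $t_j(\Gamma;\Gamma_k)=0$ for every $j$ and every Farber sequence, whence $\rho^{\Z}(\Gamma)=0$; the task thus reduces to establishing $\rho^{(2)}(\Gamma)=0$ for the same class of mapping tori.

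My first approach would exploit the semi-direct product structure $\Gamma = G\rtimes_\phi\Z$ by means of L\"{u}ck's formula expressing the $\ell^2$-torsion of a mapping torus in terms of Fuglede--Kadison determinants of chain endomorphisms induced by $\phi$ on an $\ell^2$-completed finite-type resolution of $G$. Because $\phi$ is polynomially growing, its induced action on any well-behaved invariant (ordinary homology, boundary action, cubical dilation factors, or translation lengths in an invariant Bass--Serre tree) is virtually unipotent, so the relevant spectral radii collapse to $1$ and the determinants in question should be trivial. I would implement this by mirroring the proof of \cref{main}: exhibit $\phi$-invariant splittings (via the JSJ machinery in the relatively hyperbolic case, via the cube complex structure for right-angled Artin and Coxeter groups, and via a sporadic free factor decomposition in the free product setting) and then inductively reduce the computation of $\rho^{(2)}$ to that of vertex stabilisers, each of which is again a polynomially-growing mapping torus of strictly lower complexity. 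An amenable or cyclic base case would close the induction.

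A more conceptual alternative would be to upgrade the cheap $\alpha$-rebuilding property of Abert--Bergeron--Fraczyk--Gaboriau so that, under the same hypotheses already shown to bound torsion growth, it additionally forces vanishing of $\ell^2$-torsion; the conjecture would then drop out of the combination theorem for free products with no further case analysis. The principal obstacle in either route is that $\ell^2$-torsion, unlike the $\ell^2$-Betti numbers and the integral torsion growth, is not known to transform well under the rebuilding inequalities: one requires genuine control over Fuglede--Kadison determinants of matrices over the group von Neumann algebra $\mathcal{N}(\Gamma)$, whereas polynomial growth of $\phi$ furnishes such control easily only after passing to abelianisation. Bridging the gap from \emph{polynomially growing action on (non-abelian) homotopy data} to \emph{trivial determinant over $\mathcal{N}(\Gamma)$} is where the substantive work sits, and it is the step I would expect to dominate the proof.
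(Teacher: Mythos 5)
This statement is a conjecture (quoted from L\"uck), and the paper does not prove it; immediately after stating it, the paper poses as an open question exactly the case you target, namely whether $\rho^{(2)}(\Gamma)$ vanishes for the mapping tori covered by \cref{main}. You have correctly identified the structure of the problem: \cref{main} gives $\rho^{\Z}(\Gamma)=0$ for these groups, so the conjecture for this class reduces to vanishing of $\ell^2$-torsion, which is precisely the paper's open question.

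Both of your sketched routes are heuristics rather than proofs, and you flag the gap honestly. The first (L\"uck's mapping-torus determinant formula, combined with the $\phi$-invariant splittings used for \cref{main}) is essentially how Clay handled the free-by-cyclic case, but for the wider classes the passage from polynomial growth of $\phi$ to triviality of Fuglede--Kadison determinants over $\mathcal{N}(\Gamma)$ is genuinely missing and not expected to be routine; polynomial growth gives you unipotence on abelianised invariants, but the determinant lives over the full group von Neumann algebra. The second route --- upgrading the cheap rebuilding property so that it also forces $\rho^{(2)}=0$ --- is not supported by the cited machinery: the rebuilding estimates govern Betti numbers and torsion of finite quotients, but provide no handle on Fuglede--Kadison determinants of operators over $\mathcal{N}(\Gamma)$. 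So your reading of the situation is accurate and your reduction agrees with the paper's own framing; but neither you nor the paper has a proof of this statement, and the paper explicitly records it as open.
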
 The conjecture is known to hold for some classes of groups, including amenable groups by the work of Kar--Kropholler--Nikolov \cite{KarKrophollerNikolov2017} and Li--Thom \cite{LiThom2014}, and fundamental groups of closed aspherical manifolds which admit a non-trivial $S^1$-action, or contain a nontrivial elementary amenable normal subgroup, by the work of L\"{u}ck \cite{Lueck2013}. Okun--Schreve showed that the conjecture is also true in the case of right-angled Artin groups, where the torsion does not vanish in general \cite{OkunSchreve2021}.

The work of Clay \cite{Clay2017}, combined with the results mentioned above, confirms the conjecture  in the case of mapping tori of polynomially-growing outer automorphisms of finite rank free groups. Hence, our work leads to the following natural question:

\begin{question}
    Let $\Gamma$ be the mapping torus of a polynomially-growing automorphism $\phi \in \mathrm{Aut}(G)$ as in \cref{main}. Is it true that the $\ell^2$-torsion $\rho^{(2)}(\Gamma)$ of $\Gamma$ vanishes?
\end{question}

\subsection*{Acknowledgements}
This work has received funding from the European Research Council (ERC) under the European Union's Horizon 2020 research and innovation programme (Grant agreement No. 850930). YG was supported by the LABEX MILYON of Université de Lyon. MK was supported by an Engineering and Physical Sciences Research Council studentship (Project reference 2422910).

YG thanks Damien Gaboriau for numerous very helpful discussions regarding the cheap rebuilding property. All the authors thank the referee for numerous useful comments which led to a significant improvement in the exposition of this paper.
\section{Background}

\subsection{Growth of automorphisms}\label{sec:growth}

Fix a finite generating set $S$ of a group $G$. For any $g \in G$, let $\| [g] \|_S$ denote the length of the shortest word in the conjugacy class $[g]$ of $g$, and $|g|_S$ the length of the shortest word in the generators $S$ representing the element $g$ in $G$. An outer automorphism $\Phi \in \mathrm{Out}(G)$ is said to \emph{grow polynomially}, if for every conjugacy class $c$ in $G$, there exists some integer $d \geq 0$ and real number $C > 0$ such that for all $n \in \mathbb{N}$, \[ \|\Phi^n(c)\|_S \leq Cn^d + C.\]
An automorphism $\phi \in \Aut(G)$ is said to \emph{grow polynomially}, if for every element $g \in G$, there exists some integer $d \geq 0$ and $C >0$ such that for all $n \in \NN$, \[|\phi^n(g)|_S  \leq Cn^d + C.\]
Note that for any two finite generating sets $S$ and $S'$, the corresponding word metrics on $G$ are bi-Lipschitz equivalent. In particular, the definitions of growth are independent of the choice of a finite generating set.

\begin{remark}
    Many naturally-occurring groups, including torsion-free hyperbolic groups and abelian groups, experience a growth rate dichotomy: for every $\phi \in \mathrm{Aut}(G)$, each $g \in G$ admits either polynomial or exponential growth under the iterations of $\phi$ (see \cite[Theorem~1.1]{Coulon2022}). In the case of free groups, Levitt~\cite{Levitt2009} gave complete classification of all possible types of growth for automorphisms of free groups. However, as shown by Coulon~\cite{Coulon2022}, there exist groups with automorphisms which exhibit more exotic types of growth.
\end{remark}

\subsection{Free products, free factor systems and the graph of free factors}

Let $G_1, \ldots, G_k$ be a finite collection of non-trivial finitely generated groups, let $F_N$ be a free group of rank $N$ and let $$G=G_1 \ast \ldots \ast G_k \ast F_N.$$ We denote by $\calf$ the set of conjugacy classes of the groups $G_i$ with $i \in \{1,\ldots,k\}$. We refer to the pair $(G,\calf)$ as a \emph{free product}.

\begin{defn}\label{Defn:sporadic}
The pair $(G,\calf)$ is a \emph{sporadic free product} if one of the following holds:
\begin{enumerate}
    \item we have $k=0$ and $G=\ZZ$;
    \item we have $k=1$ and $G=G_1$ or $G=G_1 \ast \ZZ$;
    \item we have $k=2$ and $G=G_1 \ast G_2$.
\end{enumerate}
Otherwise, the pair $(G,\calf)$ is a \emph{nonsporadic free product}.
\end{defn}

Given a free product $(G,\calf)$, an element $g \in G$ is \emph{peripheral} if there exists $[A]\in \calf$ with $g \in A$. Otherwise, we say that $g$ is \emph{nonperipheral}.

A \emph{free factor system of $(G,\calf)$} is a finite collection $\calf'=\{[A_1],\ldots, [A_\ell]\}$ of conjugacy classes of non-trivial finitely generated subgroups of $G$ such that: 
\begin{enumerate}
    \item for every $i \in \{1,\ldots,k\}$, the group $G_i$ is contained in some subgroup $A$ of $G$ with $[A] \in \calf'$;
    \item there exists a subgroup $B$ of $G$ such that $G=A_1 \ast \ldots \ast A_\ell \ast B$.
\end{enumerate} A free factor system of $(G,\calf)$ is \emph{proper} if it is distinct from $\calf$ and $\{[G]\}$.

There is a natural partial order on the set of free factor systems of $(G,\calf)$, where $\calf_1 \leq \calf_2$ if, for every $[A] \in \calf_1$, there exists $[B] \in \calf_2$ with $A \subseteq B$. Note that $\calf$ is minimal for this partial order. A \emph{free factor} of $(G,\calf)$ is an element of a free factor system.

\begin{defn}\label{Defn:Freefactorgraph}
    Let $(G,\calf)$ be a free product. The \emph{free factor graph of $(G,\calf)$}, denoted by $\mathrm{FF}(G,\calf)$, is the graph whose vertices are the proper free factors of $(G,\calf)$, two free factors $\calf_1$ and $\calf_2$ being adjacent if $\calf_1 < \calf_2$ or $\calf_2 < \calf_1$. 
\end{defn}

By results of Guirardel and Horbez~\cite[Proposition~2.11]{Guirardelhorbez19} (see also the work of Bestvina and Feighn~\cite{bestvina2014hyperbolicity} and Handel and Mosher~\cite{handel2014relative} for the case $G=F_N$), the graph $\mathrm{FF}(G,\calf)$ is Gromov hyperbolic.

We say that an outer automorphism $\Phi \in \Out(G)$ \emph{preserves} a free factor system $\calf$ if $\Phi$ fixes each conjugacy class in $\calf$. We write $\Out(G, \calf)$ to denote the subgroup of $\Out(G)$ which consists of the outer automorphisms which preserve $\calf$. The group $\Out(G,\calf)$ has a natural action by isometries on the graph $\mathrm{FF}(G,\calf)$ induced by its action on the set of free factors of $(G,\calf)$. The next result describes the loxodromic elements of $\mathrm{FF}(G,\calf)$. We say that an element $\Phi\in \Out(G,\calf)$ is \emph{fully irreducible} if no positive power of $\Phi$ preserves a proper free factor system of $(G,\calf)$. When $(G, \calf)$ is a free product and $\calf = \emptyset$, this reduces to the usual notion of being fully irreducible for elements of $\Out(F_N)$, where $F_N$ is a free group of rank $N$.

\begin{thm}\cite[Theorem~4.1]{Guirardelhorbez19}\label{Thm:loxofreefactorgraph} 
Let $(G,\calf)$ be a nonsporadic free product. An element $\Phi \in \Out(G,\calf)$ is a loxodromic element of $\mathrm{FF}(G,\calf)$ if and only if $\Phi$ is fully irreducible.
\end{thm}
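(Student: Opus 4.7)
The plan is to adapt the Bestvina--Feighn strategy from the free group setting to the setting of free products. The easy direction is essentially formal: suppose $\Phi$ is loxodromic on $\mathrm{FF}(G,\calf)$. A loxodromic isometry of a Gromov hyperbolic graph has unbounded orbit and no periodic vertex. If some positive power $\Phi^n$ preserved a proper free factor system $\calf'$, then $\calf'$ would be a vertex of $\mathrm{FF}(G,\calf)$ fixed by $\Phi^n$, contradicting loxodromicity. So $\Phi$ must be fully irreducible.

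For the reverse direction, suppose $\Phi$ is fully irreducible. The goal is to produce a quasi-axis for $\Phi$ inside the appropriate relative outer space $\calo(G,\calf)$ for free products (as developed by Guirardel--Levitt), and then push it forward to $\mathrm{FF}(G,\calf)$ via a suitably chosen coarse projection. First I would establish a relative train track representative of $\Phi$ acting on some simplicial tree $T_\Phi \in \calo(G,\calf)$; full irreducibility ensures there is essentially one expanding stratum, hence an attracting lamination $\Lambda^+_\Phi$ and a stretch factor $\lambda > 1$.

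Next I would define (or invoke the Guirardel--Horbez analogue of) the Bestvina--Feighn projection $\pi: \calo(G,\calf) \to \mathrm{FF}(G,\calf)$, sending a tree to a free factor carrying one of its short conjugacy classes. Using coarse equivariance of $\pi$ together with the fact that the train track homothety on $T_\Phi$ multiplies lengths by $\lambda$, one shows that the $\Phi$-orbit of $\pi(T_\Phi)$ in $\mathrm{FF}(G,\calf)$ is a quasi-geodesic, from which loxodromicity follows.  An alternative route is to appeal directly to the classification of isometries of a hyperbolic graph: since $\mathrm{FF}(G,\calf)$ is Gromov hyperbolic, it suffices to rule out the parabolic and elliptic cases, the latter being precisely the statement that some power fixes a proper free factor system.

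The main obstacle is the projection step: one must develop enough of the relative train track and lamination theory for free products to show that distinct laminations are carried by distinct free factors, and that $\pi$ sends the axis of $\Phi$ in relative outer space to a quasi-geodesic in $\mathrm{FF}(G,\calf)$. In the absolute case this is the content of Bestvina--Feighn's hyperbolicity theorem and its refinements by Bestvina--Feighn--Handel; the relative case requires a careful adaptation, which is precisely what Guirardel--Horbez carry out.
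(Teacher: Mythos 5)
This statement is imported verbatim from Guirardel--Horbez (cited as Theorem~4.1 of their paper); the present paper offers no proof of its own, so there is nothing internal to compare your attempt against. What you have written is a fair high-level outline of the strategy that Guirardel--Horbez themselves carry out, so I will comment on it on its own terms.

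Your easy direction is correct in spirit but slightly misstated: a proper free factor system $\calf'$ is a finite \emph{set} of conjugacy classes, not a single vertex of $\mathrm{FF}(G,\calf)$. The correct statement is that if $\Phi^n$ preserves $\calf'$ then, since ``preserves'' here means fixing each conjugacy class in $\calf'$, the power $\Phi^n$ fixes each vertex $[A_i]$ of $\mathrm{FF}(G,\calf)$ with $[A_i]\in\calf'$; a bounded (indeed fixed) orbit rules out loxodromicity. This is an easy repair.

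For the hard direction your outline identifies the right machinery (relative train tracks, expanding stratum with stretch factor $\lambda>1$, attracting lamination, coarse equivariant projection $\calo(G,\calf)\to\mathrm{FF}(G,\calf)$), and you are right that the genuine content is showing the projection of a train-track axis is a quasi-geodesic. However, your proposed ``alternative route'' --- ruling out elliptic and parabolic cases directly --- undersells the difficulty. Since $\mathrm{FF}(G,\calf)$ is not locally finite, elliptic does not mean fixing a vertex, only having bounded orbit, so ``ruling out the elliptic case'' is not literally the same as full irreducibility; and ruling out parabolics is a nontrivial theorem in its own right (Handel--Mosher in the absolute case) which again requires the lamination/arational-tree machinery. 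In fact the route Guirardel--Horbez favour is closer to: full irreducibility gives an arational tree $T$ with $\mathrm{SF}_T(\Phi)\neq 1$, hence north--south dynamics on the boundary of $\mathrm{FF}(G,\calf)$, hence loxodromic --- which is essentially the same apparatus this paper quotes in \cref{Prop:fixarationalboundary} and \cref{Lem:stretchingfactor}. So your plan is on the right track, but the ``alternative'' is not actually a shortcut.
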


The next theorem gives an existence condition of fully irreducible elements in subgroups of $\Out(G,\calf)$.

\begin{thm}\cite[Theorem~7.1]{Guirardelhorbez19}\label{Thm:fullyirreducibleexistence}
Let $(G,\calf)$ be a nonsporadic free product and let $H$ be a finitely generated subgroup of $\Out(G,\calf)$. If $H$ does not virtually preserve a proper $(G,\calf)$-free factor system then $H$ contains a fully irreducible outer automorphism.
\end{thm}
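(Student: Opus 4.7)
The plan is to study the action of $H$ on the Gromov hyperbolic graph $\mathrm{FF}(G,\calf)$ and extract a loxodromic element. By \Cref{Thm:loxofreefactorgraph}, loxodromic isometries of this action are exactly the fully irreducible elements of $\Out(G,\calf)$, so it suffices to show that a finitely generated subgroup $H \leq \Out(G,\calf)$ containing no loxodromic element of $\mathrm{FF}(G,\calf)$ must virtually preserve a proper free factor system of $(G,\calf)$.

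First, I would invoke the Gromov trichotomy for groups acting by isometries on a hyperbolic space. Since $H$ contains no loxodromic, its action is elementary, so either (a) $H$ has a bounded orbit in $\mathrm{FF}(G,\calf)$, or (b) $H$ fixes a unique point of the Gromov boundary $\partial \mathrm{FF}(G,\calf)$. In case (a), the vertices of $\mathrm{FF}(G,\calf)$ are conjugacy classes of proper free factors of $(G,\calf)$, and a bounded orbit of a finitely generated group on a graph of this type lies within finitely many vertices (provided one verifies the requisite local finiteness/acylindricity for orbits; otherwise one replaces the bounded orbit by a minimal $H$-invariant quasi-convex subset). Passing to the finite-index subgroup of $H$ that stabilises this finite collection pointwise yields the desired invariant proper free factor system.

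Case (b) is where the real work lies. Here I would appeal to the identification, developed by Guirardel and Horbez, of $\partial \mathrm{FF}(G,\calf)$ with (equivalence classes of) arational $(G,\calf)$-trees. The unique $H$-fixed point at infinity corresponds to a projective class of arational trees, so $H$ preserves an arational $(G,\calf)$-tree $T$ up to homothety; the resulting stretch homomorphism $H \to \mathbb{R}^{\times}_{>0}$ has finite-index kernel $H_0$ that genuinely preserves $T$. One then shows that an $H_0$-invariant arational tree furnishes an $H_0$-invariant proper free factor system, either via the conjugacy classes of vertex stabilisers of $T$ (when $T$ has nontrivial point stabilisers) or via the dual lamination of $T$ and its carrier free factors (in the free arational case). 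Combined with case (a), this proves the contrapositive.

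The main obstacle is case (b): extracting an honest $H_0$-invariant free factor from a fixed boundary point of $\mathrm{FF}(G,\calf)$. This hinges on the geometric interpretation of the boundary via arational trees and the corresponding theory of dual laminations, and is the technical core of Guirardel--Horbez's argument, generalising Handel--Mosher's original treatment for $\Out(F_N)$. By contrast, the bounded-orbit case and the initial reduction via the Gromov trichotomy are essentially formal consequences of the hyperbolicity of $\mathrm{FF}(G,\calf)$.
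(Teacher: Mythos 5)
This theorem is cited by the paper from Guirardel--Horbez \cite[Theorem~7.1]{Guirardelhorbez19} and is not proved in the paper, so there is no internal argument to compare against; I am therefore evaluating your proposal against the structure of the cited reference. Your top-level strategy --- run the Gromov classification for the $H$-action on the hyperbolic graph $\mathrm{FF}(G,\calf)$ and note via \Cref{Thm:loxofreefactorgraph} that loxodromics are precisely fully irreducibles --- is indeed the correct starting point, and is the one Guirardel--Horbez take.

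However, both of your cases contain genuine gaps. In case (a) you argue that a bounded $H$-orbit ``lies within finitely many vertices,'' with a parenthetical caveat about local finiteness. The caveat is fatal: $\mathrm{FF}(G,\calf)$ is emphatically \emph{not} locally finite (a fixed free factor system is comparable to infinitely many others), and replacing the orbit by a minimal quasi-convex invariant subset does not produce a finite vertex set either. The step from ``bounded orbit in $\mathrm{FF}$'' to ``virtually invariant proper free factor system'' is not a formal consequence of hyperbolicity; in the absolute case it is the content of Handel--Mosher's subgroup decomposition theory, and the relative version is a substantial piece of Guirardel--Horbez's paper. You cannot treat it as a footnote.

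Case (b) is worse: the conclusion you aim for there is actually false. You propose to ``extract an honest $H_0$-invariant proper free factor system'' from an $H_0$-invariant arational tree $T$, either via point stabilisers or via a carrier of the dual lamination. But by the very definition of a $(G,\calf)$-arational tree (recalled in the paper just before \Cref{Prop:fixarationalboundary}), every proper free factor $A$ acts on its minimal subtree of $T$ as a Grushko $(A,\calf|_A)$-tree; consequently the only elliptic free factors are the ones already in $\calf$, and no proper free factor system carries the dual lamination. An arational tree has, almost by design, \emph{no} invariant proper free factor system, so neither route you sketch can work. The correct use of the boundary fixed point in Guirardel--Horbez runs in the other direction: pass to a finite-index subgroup fixing the homothety class $[T]$ (as in \Cref{Prop:fixarationalboundary}, and note there is already a subtlety in getting from a fixed boundary point to a fixed homothety class, which the paper's remark addresses), and then apply the stretch-factor homomorphism $\mathrm{SF}_T$ together with \Cref{Lem:stretchingfactor}. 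If $\mathrm{SF}_T$ is nontrivial on the subgroup one directly produces a fully irreducible; if it is trivial one argues that the whole subgroup acts elliptically on $\mathrm{FF}$, reducing to the bounded-orbit case rather than producing a new invariant free factor. Relatedly, your assertion that the stretch homomorphism to $\mathbb{R}^{\times}_{>0}$ has finite-index kernel is not true of homomorphisms to $\mathbb{R}$ in general; in the contrapositive you are running (no loxodromics in $H$) the kernel is in fact all of $H$ by \Cref{Lem:stretchingfactor}, but the reasoning you give for finite index is unsound.

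In short: the Gromov trichotomy scaffolding is right, but both branches need to be rebuilt --- the elliptic branch needs the relative Handel--Mosher machinery, and the parabolic branch must be rerouted through the stretch factor, since arational trees cannot supply the invariant free factor system you are hoping to find.
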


We now describe the Gromov boundary of $\mathrm{FF}(G,\calf)$. A \emph{$(G,\calf)$-tree} is an $\RR$-tree $T$ equipped with an action of $G$ by isometries such that, for every $i \in \{1,\ldots,k\}$, the group $G_i$ fixes a point in $T$. Given a $(G,\calf)$-tree $T$ and a point $x \in T$, we denote by $\Stab(x)$ the stabiliser of $x$. 

A $(G,\calf)$-tree is \emph{very small} if tripod stabilisers are trivial and arc stabilisers are cyclic (maybe trivial), nonperipheral and root-closed. 

A \emph{Grushko $(G,\calf)$-tree} is a $(G,\calf)$-tree $T$ such that $T$ is simplicial, the action of $G$ is minimal, edge stabilisers are trivial and, for every vertex $v$, either $\Stab(v)$ is trivial or $[\Stab(v)] \in \calf$. Recall that minimal means that $G$ does not preserve a proper subtree of $T$.

Note that, if $[A]$ is a free factor of $(G,\calf)$, the free factor system $\calf$ induces a free factor system $\calf|_A$ of $A$. A \emph{$(G,\calf)$-arational tree} is a very small $(G,\calf)$-tree $T$ which is not a Grushko tree and such that, for every free factor $[A]$ of $(G,\calf)$ the action of $(A,\calf|_A)$ on its minimal tree in $T$ induces a Grushko $(A,\calf|_A)$-tree. 

The following theorem relates the Gromov boundary of $\mathrm{FF}(G,\calf)$ with the $(G,\calf)$-arational trees.

\begin{prop}\cite[Theorem~3.4]{Guirardelhorbez19}\label{Prop:fixarationalboundary}
Let $(G,\calf)$ be a nonsporadic free product and let $H$ be a finitely generated subgroup of $\Out(G,\calf)$. If $H$ has a finite orbit in $\partial_{\infty} \mathrm{FF}(G,\calf)$, then $H$ has a finite index subgroup which fixes the homothety class of a $(G,\calf)$-arational tree.
\end{prop}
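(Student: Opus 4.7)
The plan is to reduce to the case where a finite-index subgroup $H_0 \leq H$ fixes a single boundary point $\xi \in \partial_\infty \mathrm{FF}(G,\calf)$, and then leverage a correspondence between boundary points of the free factor graph and arational $(G,\calf)$-trees to produce a homothety class fixed by a further finite-index subgroup. Since $H$ has a finite orbit on $\partial_\infty \mathrm{FF}(G,\calf)$ and acts by homeomorphisms, the stabiliser $H_0 \leq H$ of some point $\xi$ has finite index, so this first reduction is immediate.

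The core step I would carry out is to build an $\Out(G,\calf)$-equivariant map $\pi$ from the space $\mathcal{AT}(G,\calf)$ of homothety classes of arational $(G,\calf)$-trees to $\partial_\infty \mathrm{FF}(G,\calf)$. Given an arational tree $T$, I would approximate it by a sequence of Grushko $(G,\calf)$-trees in the equivariant Gromov--Hausdorff topology and track a corresponding sequence of proper free factors in $\mathrm{FF}(G,\calf)$; the target is to show that this sequence converges to a well-defined boundary point independent of the approximation. Verifying continuity, surjectivity and equivariance would use hyperbolicity of $\mathrm{FF}(G,\calf)$ together with Bestvina--Feighn-style unfolding arguments applied to Grushko trees folding onto $T$.

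The main obstacle, and where the bulk of the work lies, is controlling the fibre $\pi^{-1}(\xi)$: one needs that the set of homothety classes of arational trees mapping to $\xi$ carries only finitely many $H_0$-orbits. I would attack this by attaching to each arational tree a canonical \emph{dual lamination} and proving that two arational trees share a dual lamination if and only if they have the same image under $\pi$, following the strategy of Bestvina--Reynolds and Hamenstädt in the $\Out(F_N)$ case. In the free group setting, arational trees with a prescribed dual lamination form a single simplex of projectivised length functions, which is what ultimately yields finiteness modulo homothety. Porting the theory of laminations, mixing properties and this simplex description from free groups to $(G,\calf)$-trees with non-trivial peripheral structure is the delicate step; the peripheral subgroups force one to work with relative laminations and to verify that Grushko-type restrictions on peripheral actions are preserved under the limiting process.

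Once the fibre $\pi^{-1}(\xi)$ is known to consist of finitely many $H_0$-orbits of homothety classes, $H_0$ permutes a finite set, and passing to the stabiliser of a single class in that set gives a finite-index subgroup of $H$ fixing the homothety class of an arational $(G,\calf)$-tree, as required.
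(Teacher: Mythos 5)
Your proposal and the paper's argument diverge in a fundamental way: the paper does not prove the statement from scratch, but cites Guirardel--Horbez~\cite[Theorem~3.4]{Guirardelhorbez19} to obtain a finite-index subgroup of $H$ preserving the $G$-equivariant homeomorphism class (in the observers' topology) of a $(G,\calf)$-arational tree $T$, and then --- in the remark following the statement --- invokes Guirardel's result~\cite[Proposition~13.5]{guirardel2019algebraic} that the set of projective classes of arational trees equivalent to $T$ is a finite-dimensional simplex, so a further finite-index subgroup must fix one of the finitely many extremal points of this simplex, i.e.\ a homothety class. Your sketch, by contrast, proposes to rebuild this whole machinery: constructing a boundary map $\pi$ from arational trees to $\partial_\infty \mathrm{FF}(G,\calf)$, dualising to laminations, transferring Bestvina--Reynolds/Hamenst\"adt-type structure theory from $\Out(F_N)$ to the relative setting, and so on. This is not a proof of the proposition but an outline of a research programme; the difficult steps (``porting the theory of laminations\,\dots from free groups to $(G,\calf)$-trees,'' the surjectivity and equivariance of $\pi$, the structure of fibres) are precisely the content of the cited papers and cannot be waved through. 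The efficient move is to quote those results.

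There is also a concrete error in your last step. You assert that the fibre $\pi^{-1}(\xi)$ ``carries only finitely many $H_0$-orbits of homothety classes,'' and deduce that $H_0$ permutes a finite set. But the fibre is a finite-dimensional \emph{simplex} of projectivised length functions; it is generically uncountable, and $H_0$ need not have finite orbits on it. What is finite, and what the argument actually requires, is the set of \emph{extremal points} (vertices) of that simplex. The correct conclusion is that $H_0$ preserves the simplex, hence permutes its finitely many extremal points, and a finite-index subgroup of $H_0$ fixes one of them --- which is a homothety class of an arational $(G,\calf)$-tree. As written, your finiteness claim is false and the gap would need to be repaired exactly along these lines.
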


\begin{remark}
Note that \cite[Theorem~3.4]{Guirardelhorbez19} only shows that $H$ has a finite index subgroup preserving the $G$-equivariant homeomorphism class (for the observers' topology) of a $(G,\calf)$-arational tree $T$. However, by~\cite[Proposition~13.5]{guirardel2019algebraic}, the set of projective classes of $(G,\calf)$-arational trees which are equivalent to $T$ is a finite dimensional simplex. Therefore, the group $H$ has a finite index subgroup preserving the homothety class of a $(G,\calf)$-arational tree equivalent to $T$ corresponding to an extremal point of the simplex.
\end{remark}

If $T$ is a $(G,\calf)$-arational tree, we denote by $[T]$ its homothety class and by $\mathrm{SF} \colon \Stab([T]) \to \RR_+^*$ the stretching factor homomorphism. 

\begin{lemma}\cite[Proposition~6.3, Corollary~6.12]{Guirardelhorbez19}\label{Lem:stretchingfactor}
Let $T$ be a $(G,\calf)$-arational tree. For every $\Phi \in \Stab([T])$, we have $\mathrm{SF}(\Phi) \neq 1$ if and only if $\Phi$ is fully irreducible.
\end{lemma}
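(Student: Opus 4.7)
The plan is to prove both implications using the interplay between the stretching factor of $\Phi$ on $T$ and the dynamics of $\Phi$ on the free factor graph. Recall the setup: $\Phi \in \Stab([T])$ means that the tree obtained from $T$ by precomposing the action with (a lift of) $\Phi$ is $G$-equivariantly homothetic to $T$ with ratio $\mathrm{SF}(\Phi)$. In particular, $\mathrm{SF}(\Phi) = 1$ is exactly the condition that $\Phi$ admits a representative $\phi \in \Aut(G)$ acting as a $\phi$-twisted isometry of $T$.

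For the implication $\mathrm{SF}(\Phi) \ne 1 \Rightarrow \Phi$ fully irreducible, I would argue by contraposition. Suppose some positive power $\Phi^n$ preserves a proper free factor system $\calf' = \{[A_1],\dots,[A_\ell]\}$. Since $T$ is arational, each minimal $A_j$-invariant subtree $T_{A_j} \subseteq T$ is a Grushko $(A_j,\calf|_{A_j})$-tree, in particular simplicial with trivial edge stabilisers and finitely many $A_j$-orbits of edges. A lift of $\Phi^n$ sends $T_{A_j}$ to another minimal $A_j$-invariant subtree of $T$, which by minimality coincides with $T_{A_j}$, so the homothety of ratio $\mathrm{SF}(\Phi)^n$ restricts to a $G$-equivariant self-homothety of $T_{A_j}$. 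Any such self-homothety of a simplicial tree with only finitely many edge orbits permutes the finite set of edge-length values and hence has ratio $1$. Therefore $\mathrm{SF}(\Phi^n) = 1$, and since $\mathrm{SF}$ is a homomorphism, $\mathrm{SF}(\Phi) = 1$.

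For the reverse implication, suppose $\Phi$ is fully irreducible. By \Cref{Thm:loxofreefactorgraph}, $\Phi$ acts loxodromically on the Gromov hyperbolic graph $\mathrm{FF}(G,\calf)$, and \Cref{Prop:fixarationalboundary} applied to $\langle\Phi\rangle$ identifies the two boundary fixed points of $\Phi$ with homothety classes of $(G,\calf)$-arational trees; our $T$ is one of them by assumption. Assume for contradiction that $\mathrm{SF}(\Phi) = 1$. Then $\Phi$ acts as a $G$-equivariant isometry of $T$. Passing to a relative train-track representative of $\Phi$ on a simplicial $(G,\calf)$-tree approximating $T$ in outer space, one identifies $\mathrm{SF}(\Phi)$ with the Perron--Frobenius expansion factor of the top stratum of the associated transition matrix. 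Unit expansion factor in turn forces $\Phi$ to have bounded orbits on $\mathrm{FF}(G,\calf)$, contradicting loxodromicity.

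The principal obstacle is the train-track step in the reverse implication. Making the identification of $\mathrm{SF}(\Phi)$ with a Perron--Frobenius eigenvalue rigorous in the relative $(G,\calf)$ setting, and deducing that unit expansion is incompatible with loxodromic behaviour on the free factor graph, requires the full apparatus of relative train tracks for free products and a careful analysis of how the homothety classes of arational trees sit at the Gromov boundary of $\mathrm{FF}(G,\calf)$; this is precisely the technical content carried out in \cite[\S 6]{Guirardelhorbez19}.
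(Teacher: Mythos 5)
This lemma is cited from Guirardel--Horbez \cite{Guirardelhorbez19} and not reproved in the paper, so there is no internal argument to compare against; what follows assesses your sketch on its own.

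Your forward implication (by contraposition) is essentially sound, modulo a small imprecision: you should select a non-peripheral factor $A_j$ of the invariant proper free factor system, so that $T_{A_j}$ is not a single point --- such a factor exists whenever $\calf' \neq \calf$ --- and when $T_{A_j}$ is a line the ``permutes edge-length values'' language should give way to a direct translation-length computation (the generator's translation length is preserved up to sign, forcing ratio $1$). The underlying idea, that an equivariant self-homothety of a cocompact simplicial tree must be an isometry, is correct. The reverse implication, however, has a genuine gap. After reducing to a boundary fixed point of the loxodromic action on $\mathrm{FF}(G,\calf)$, you assert that $\mathrm{SF}(\Phi)$ coincides with the Perron--Frobenius eigenvalue of a relative train track representative and that this eigenvalue is $>1$; you then acknowledge that making this rigorous ``requires the full apparatus of relative train tracks for free products.'' That acknowledgement is accurate, and it is fatal to the sketch as a proof: those two claims \emph{are} the substance of the implication, not routine verifications around it. The remark in the paper following \Cref{lem:polynonfullyirred} lays out exactly this train-track route (relative train tracks for free products, the inequality $\lambda(\Phi)>1$ for fully irreducible elements of a nonsporadic free product, and the identification of the attracting tree with $T$), which corroborates the shape of your plan while also confirming that the steps you defer carry the real mathematical weight.
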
  

\begin{lemma}\label{lem:polynonfullyirred}
    Let $(G,\calf)$ be a nonsporadic free product and let $\Phi \in \Out(G,\calf)$ be polynomially growing. Then $\Phi$ is not fully irreducible.
\end{lemma}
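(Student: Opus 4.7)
The plan is to argue by contradiction. Suppose $\Phi$ is polynomially growing and also fully irreducible. By \cref{Thm:loxofreefactorgraph}, $\Phi$ acts loxodromically on the hyperbolic graph $\mathrm{FF}(G,\calf)$, and so $\sgen{\Phi}$ has a finite orbit on $\partial_\infty \mathrm{FF}(G,\calf)$ (namely its attracting and repelling fixed points). Applying \cref{Prop:fixarationalboundary} to the cyclic subgroup $\sgen{\Phi}$ produces a positive power $\Psi := \Phi^N$ fixing the homothety class $[T]$ of a $(G,\calf)$-arational tree $T$. Since positive powers of $\Psi$ are themselves positive powers of $\Phi$, the outer automorphism $\Psi$ is still fully irreducible, and hence by \cref{Lem:stretchingfactor} its stretching factor $\lambda := \mathrm{SF}(\Psi)$ is different from $1$; up to replacing $\Psi$ by $\Psi^{-1}$, which remains polynomially growing, I would arrange that $\lambda > 1$.

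Next, I would turn $\lambda > 1$ into exponential growth of translation lengths. By the definition of the stretching factor, for every conjugacy class $c$ in $G$ one has $\|\Psi^k(c)\|_T = \lambda^k\,\|c\|_T$ for all $k \geq 0$. I would then pick $c$ to be a nonperipheral conjugacy class with $\|c\|_T > 0$; such a class exists because the action of $G$ on the nontrivial arational tree $T$ cannot be entirely elliptic, while every peripheral element is elliptic, so some nonperipheral element must act hyperbolically on $T$.

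Finally, I would compare translation length on $T$ with conjugacy length in $G$ via the standard Lipschitz bound. Fixing a finite generating set $S$ of $G$, a basepoint $v \in T$, and the constant $M := \max_{s\in S} d_T(v,sv)$, one has $\|g\|_T \leq d_T(v,gv) \leq M\,|g|_S$ for every $g\in G$, and hence $\|g\|_T \leq M\,\|[g]\|_S$ by conjugation-invariance of translation length. Combining this with the previous step yields
\[\|\Psi^k(c)\|_S \ \geq\ \frac{\lambda^k\,\|c\|_T}{M},\]
which grows exponentially in $k$ and therefore contradicts the polynomial growth of $\Psi$ (and hence of $\Phi$). The step I expect to require the most care is the existence of a nonperipheral class $c$ with $\|c\|_T > 0$; this should follow from the minimality and nontriviality of an arational action combined with the fact that peripheral subgroups are elliptic, but pinning down the precise reference within the Guirardel–Horbez framework is the one verification in the outline that is not entirely immediate.
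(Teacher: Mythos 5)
Your proposal is correct and follows essentially the same route as the paper: pass to a power fixing the homothety class of an arational tree via \cref{Thm:loxofreefactorgraph} and \cref{Prop:fixarationalboundary}, use \cref{Lem:stretchingfactor} to get a stretching factor greater than $1$, and compare translation length in $T$ with conjugacy length via the Culler--Morgan Lipschitz bound to contradict polynomial growth. The only cosmetic difference is the endgame: you exhibit a nonperipheral class $c$ with $\|c\|_T>0$ and derive exponential growth of $\|\Psi^k(c)\|_S$, whereas the paper argues that polynomial growth forces all translation lengths to vanish and then invokes Culler--Morgan to conclude $G$ fixes a point in $T$, a contradiction; both hinge on the same fact that a finitely generated group acting on an $\mathbb{R}$-tree with all elements elliptic has a global fixed point, so the step you flagged as needing care is exactly the one the paper cites Culler--Morgan for.
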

\begin{proof}
    Suppose towards a contradiction that $\Phi$ is fully irreducible. By \Cref{Thm:loxofreefactorgraph}, the element $\Phi$ is a loxodromic element of $\mathrm{FF}(G,\calf)$. In particular, $\Phi$ acts on $\mathrm{FF}(G,\calf)$ with North-South dynamics and has exactly two finite orbits in $\partial_{\infty}\mathrm{FF}(G,\calf)$ consisting of its attracting and repelling fixed points. Let $\xi_+$ be the attracting fixed point of $\Phi$. By \Cref{Prop:fixarationalboundary}, up to taking a power of $\Phi$, we may suppose that $\Phi$ fixes the homothety class of a $(G,\calf)$-arational tree $T$ associated with $\xi_+$. 
    By \Cref{Lem:stretchingfactor}, the stretching factor $\mathrm{SF}_T(\Phi)$ of $\Phi$ is distinct from $1$. Since $\mathrm{SF}_T \colon \langle \Phi \rangle \to \RR_+^*$ is a homomorphism, either $\mathrm{SF}_T(\Phi)>1$ or $\mathrm{SF}_T(\Phi^{-1})>1$. Up to replacing $\Phi$ by $\Phi^{-1}$, we may assume that $\lambda=\mathrm{SF}_T(\Phi)>1$. Note that we have only possibly replaced $\Phi$ by $\Phi^{-1}$ without changing the tree $T$. In particular, we are not considering the repelling fixed point $\xi_-$ of $\Phi$. This is why $\mathrm{SF}_T(\Phi) \circ \mathrm{SF}_T(\Phi^{-1})=1$.

    Let $g\in G$ and let $\ell([g])$ be the translation length of the conjugacy class of $g$ in $T$. Since $\Phi$ preserves the homothety class of $T$, for every $m \in \NN$, we have $\ell(\Phi^m([g]))=\lambda^m\ell([g])$. But $\ell(\Phi^m([g]))$ is bounded from above by a multiple of $\|\Phi^m([g])\|$ (see for instance~\cite[Propositions~1.5, 1.8]{CullerMorgan1987}). As $\Phi$ is polynomially growing, this implies that, for every $g \in G$, we have $\ell([g])=0$. Since $G$ is finitely generated, the group $G$ fixes a point in $T$ (see for instance~\cite[Section 3]{CullerMorgan1987}), a contradiction. Thus, $\Phi$ is not fully irreducible.
\end{proof}

\begin{remark}
    Another proof of the fact that $\mathrm{SF}_T(\Phi)>1$ is the following. We follow the notations of the above proof. 
    
    Suppose that $\Phi$ is fully irreducible and let $U$ be a relative train track associated with $\Phi$, which exists by \cite[Theorem~8.24]{francaviglia2015stretching}, \cite[Theorem~A]{lyman2022train}. Let $\lambda(\Phi)$ be the Perron--Frobenius eigenvalue of the associated transition matrix. A consequence of~\cite[Theorem~3.9]{francaviglia2021minimally} is that, since $(G,\calf)$ is nonsporadic and $\Phi$ is fully irreducible, we have $\lambda(\Phi)>1$. 
    
    Let $T_+=\lim_{n \to \infty} \frac{T\Phi^n}{\lambda(\Phi)^n}$.  By \cite[Lemma~2.14.1]{francaviglia2021action}, the space $T^+$ is an $\RR$-tree in the boundary of the outer space $\mathbb{P}\calo$ of $(G,\calf)$. Thus, $T_+\phi=\lambda(\Phi)T_+$. 
    
    Since $\Phi$ is fully irreducible, by \cite[Theorem~5.1.2]{francaviglia2021action}, it acts with a North-South dynamics on the outer space of $(G,\calf)$. By~\cite[Theorem~3]{Guirardelhorbez19}, there exists a $\Phi$-equivariant map $\psi \colon \mathbb{P}\calo \to \mathrm{FF}(G,\calf)$ with a $\Phi$-equivariant extension $\partial \psi$ from the set of $(G,\calf)$-arational trees to $\partial_\infty \mathrm{FF}(G,\calf)$. Since $\Phi$ also acts with a North-South dynamics on $\mathrm{FF}(G,\calf)$, we see that $\partial\psi$ sends $T_+$ to $\xi_+$, so that we can choose $T$ in the proof of Lemma~\ref{lem:polynonfullyirred} to be $T_+$. In particular, we have $\mathrm{SF}_T(\Phi)=\lambda(\Phi)>1$.\hfill$\qed$
\end{remark}

Bass--Serre theory implies that for any free product $(G,\calf)$, Grushko $(G,\calf)$-trees exist. In general there are many of these trees, even working up to equivariant homothety: they form a \emph{deformation space} which is invariant under an action of $\Aut(G, \calf)$, as studied in \cite{GuirardelLevittOuterSpace}. However, if $(G,\calf)$ is sporadic, then by \cite{ForesterRigidity} and \cite{LevittRigidity} there is a fixed point for this action --- a $(G,\calf)$-tree preserved by all of $\Aut(G,\calf)$. The tree in question has a single orbit of edges, and either one or two orbits of vertices (depending on the size of $\calf$). A more elementary proof of the same fact using translation length functions is in \cite[Proposition~4.8]{Andrew2021}: while the statement concerns indecomposable factors and the whole automorphism group, the argument applies just as well restricted to any sporadic free factor system and the subgroup $\Aut(G,\calf)$. 

The sporadic splittings obtained in the following propostion will be of the form $G_1 \ast \Z$ or $G_1 \ast G_2$ rather than the degenerate case $G=\Z$. However, note that the free $\Z$ action on a line is invariant under the (finite) automorphism group, so the statement holds even in this case.


\begin{prop}\label{Prop:existenceBStree}
    Let $(G,\calf)$ be a free product and let $\Phi \in \Out(G,\calf)$ be polynomially growing. There is a free factor system $\calf'$ of $(G, \calf)$ and $k \in \NN$ such that $\Phi^k$ preserves $\calf'$ and $(G, \calf')$ is sporadic. In particular, $\Phi^k$ preserves a Bass--Serre tree associated to $\calf'$.
\end{prop}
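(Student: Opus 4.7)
My plan is to argue by iteration, using \Cref{lem:polynonfullyirred} as the inductive engine. If $(G, \calf)$ is already sporadic, set $\calf' = \calf$ and $k = 1$. Otherwise \Cref{lem:polynonfullyirred} says that $\Phi$ is not fully irreducible in $\Out(G, \calf)$, so by definition some positive power $\Phi^{k_1}$ preserves a proper free factor system $\calf_1$ of $(G, \calf)$; since polynomial growth is inherited by powers, $\Phi^{k_1}$ is still polynomially growing, now viewed as an element of $\Out(G, \calf_1)$. Iterating with $(G, \calf_1, \Phi^{k_1})$ in place of $(G, \calf, \Phi)$ produces a strictly increasing chain $\calf < \calf_1 < \calf_2 < \cdots$ of free factor systems of $G$, each preserved by a further power of $\Phi$.

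The main step is to verify that this chain terminates. I would track the lexicographic complexity $(N, n) \in \NN^2$, where $G = A_1 * \cdots * A_n * F_N$ is the Grushko decomposition of $(G, \calf_i)$ at stage $i$. A strict refinement $\calf_i < \calf_{i+1}$ with Grushko decomposition $G = B_1 * \cdots * B_{n'} * F_{N'}$ corresponds to each $B_j$ being a free product of the $A_\bullet$'s indexed by some subset $S_j \subseteq \{1, \ldots, n\}$ together with a free group of rank $r_j$; counting gives $n = \sum_j |S_j|$ and $N = N' + \sum_j r_j$. If some $r_j > 0$, then $N' < N$; if all $r_j = 0$, strict refinement forces some $|S_j| \geq 2$, so $n' < n$ while $N' = N$. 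Either way $(N', n') <_{\rm lex} (N, n)$. Since $\NN^2$ is well-ordered under the lex order, the chain terminates, and by \Cref{lem:polynonfullyirred} termination can only occur at a sporadic $(G, \calf_m)$. Setting $\calf' = \calf_m$ and $k = k_1 k_2 \cdots k_m$ gives the first assertion.

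For the ``in particular'' statement, the paragraph preceding the proposition recalls that a sporadic free product $(G, \calf')$ admits a Grushko tree $T$ which is a fixed point of the $\Aut(G, \calf')$-action on its deformation space. Any lift of $\Phi^k \in \Out(G, \calf')$ to $\Aut(G, \calf')$ preserves $T$, providing the required $\Phi^k$-invariant Bass--Serre tree. The main obstacle in this plan is the termination step; once that is in hand, the rest is a direct application of \Cref{lem:polynonfullyirred} together with the rigidity results already quoted in the paper.
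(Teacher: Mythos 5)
Your argument is correct, and it reaches the same crux as the paper --- \Cref{lem:polynonfullyirred}, i.e.\ that polynomial growth is incompatible with full irreducibility in the nonsporadic case --- but it gets there by a different route. The paper's proof is top-down: it picks a \emph{maximal} $\Phi$-periodic proper free factor system $\calf'$, invokes \Cref{Thm:fullyirreducibleexistence} (Guirardel--Horbez) to conclude that $\Phi$, viewed in $\Out(G,\calf')$, is fully irreducible, and then applies \Cref{lem:polynonfullyirred} to force $(G,\calf')$ to be sporadic; the last paragraph about the fixed tree in the deformation space is identical to yours. Your proof is bottom-up: you never use \Cref{Thm:fullyirreducibleexistence} (which for the cyclic group $\langle\Phi\rangle$ is essentially the definition of full irreducibility anyway), instead iterating \Cref{lem:polynonfullyirred} directly and proving termination by hand via the lexicographic complexity $(N,n)$. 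This is a fair trade: the paper's appeal to a ``maximal'' periodic proper free factor system silently relies on exactly the ascending chain condition that you make explicit, so your write-up is more self-contained on that point. Two small things to tighten in your termination step: (i) to see that each $B_j$ decomposes into \emph{full} conjugates of the $A_\bullet$'s and a free part, use that each $A_i$ is elliptic in the Bass--Serre tree of $\calf_{i+1}$ (so intersections $B_j\cap gA_ig^{-1}$ are trivial or all of $gA_ig^{-1}$, by triviality of edge stabilisers), and that distinct factors of one decomposition are never conjugate in $G$, so the $S_j$ really partition $\{1,\dots,n\}$; (ii) since the $A_i$ are not assumed freely indecomposable, the well-definedness of the complement rank $N$ relative to $\calf_i$ needs a word --- e.g.\ refine each $A_i$ to its absolute Grushko decomposition and use uniqueness of the free rank there. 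Both are standard, and with them your induction goes through.
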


\begin{proof}
    If $(G, \calf)$ is sporadic then the first claim is immediate. Otherwise, let $\calf'$ be a maximal $\Phi$-periodic proper free factor system. It suffices to prove that $(G, \calf')$ is sporadic. Up to taking a power of $\Phi$, we may suppose that $\calf'$ is $\Phi$-invariant. Thus, we may view $\Phi$ as an element of $\Out(G,\calf')$. By maximality of $\calf'$ and \Cref{Thm:fullyirreducibleexistence}, the element $\Phi$ is fully irreducible. Then, since $\Phi$ is polynomially growing, by \Cref{lem:polynonfullyirred} it must be the case that $(G, \calf')$ is sporadic.

    Now suppose that $(G, \calf)$ is sporadic and that $\Phi$ preserves $\calf$. Since there is a fixed point for the action of $\Aut(G,\calf)$ on its deformation space, this $(G,\calf)$-tree is preserved by $\Phi$. \qedhere
   
\end{proof}

\subsection{The cheap rebuilding property}\label{sec:CRP}

Let $\alpha \in \NN$. In this section, we give the relevant background regarding the \emph{cheap $\alpha$-rebuilding property}, which was introduced by Abert, Bergeron, Fraczyk and Gaboriau~\cite{AbertBergeronFraczykGaboriau2021} to prove that certain groups have vanishing (torsion) homology growth. Although we will not state the complete definition of this property, we list in the following propositions the properties which we will use in the rest of the paper. This property is relevant for our considerations by the following theorem.

\begin{thm}\cite[Theorem~10.20]{AbertBergeronFraczykGaboriau2021}\label{Thm:crptorsion}
 Let $\alpha \in \NN$ and let $\Gamma$ be a residually finite group of type $\mathrm{F}_{\alpha +1}$. Suppose that $\Gamma$ has the cheap $\alpha$-rebuilding property. For every Farber sequence $(\Gamma_k)_{k \in \mathbb N}$ of $\Gamma$, each $j \leq \alpha$ and every coefficient field $\KK$, we have
\[\lim_{k\to \infty} \frac{\dim_{\mathbb{K}} H_j (\Gamma_k, \mathbb{K})}{[\Gamma : \Gamma_k]} = 0 \quad \mbox{and} \quad \lim_{k\to \infty} \frac{\log | H_j(\Gamma_k,\mathbb Z)_{\rm tors}|}{[\Gamma:\Gamma_k]}=0.\]

\end{thm}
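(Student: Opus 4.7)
The plan is to exploit the defining feature of the cheap $\alpha$-rebuilding property, which --- although not stated in full in the excerpt --- should provide, for every $\varepsilon>0$, a finite chain complex $(C^{(k)}_\bullet,\partial)$ of free $\ZZ$-modules, eventually along the Farber sequence, which is quasi-isomorphic to the cellular chain complex of $\Gamma_k\backslash X$ through degree $\alpha$, has rank at most $\varepsilon\cdot[\Gamma:\Gamma_k]$ in each such degree, and whose integer boundary maps have operator norm bounded by a constant $M=M(\varepsilon,X)$ depending only on $\varepsilon$ and the ambient geometry.

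First, I would fix a free cocompact action of $\Gamma$ on a contractible CW-complex $X$ of dimension $\alpha+1$, using the type $\mathrm{F}_{\alpha+1}$ hypothesis, and set $X_k = \Gamma_k\backslash X$. The field-coefficient vanishing is then immediate from the rebuilding: since quasi-isomorphism preserves homology,
\[
\dim_{\KK} H_j(\Gamma_k,\KK) = \dim_{\KK} H_j(C^{(k)}_\bullet\otimes\KK) \leq \rank C^{(k)}_j \leq \varepsilon\cdot[\Gamma:\Gamma_k].
\]
Dividing by $[\Gamma:\Gamma_k]$, taking $k\to\infty$, and then sending $\varepsilon\to 0$ yields the first claim.

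For the torsion statement, I would use the standard fact that for a chain complex of free $\ZZ$-modules the torsion of $H_j$ is captured by the invariant factors of $\partial_{j+1}$ restricted to a complement of $\ker\partial_{j+1}$ inside $\ker\partial_j$, so that by Hadamard's inequality applied to the induced square block,
\[
\log|H_j(C^{(k)}_\bullet)_{\mathrm{tors}}| \leq \rank(\partial_{j+1})\cdot\log\|\partial_{j+1}\|_{\mathrm{op}} \leq \varepsilon\cdot[\Gamma:\Gamma_k]\cdot\log M.
\]
Dividing through by $[\Gamma:\Gamma_k]$ and letting first $k\to\infty$ and then $\varepsilon\to 0$ concludes.

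The main obstacle --- and the reason one must invoke the full strength of the cheap rebuilding property rather than a naive cell-counting bound --- is precisely the torsion estimate: a rebuilding with few cells but arbitrarily large integer entries in $\partial$ would yield useless bounds on $|H_j(\cdot)_{\mathrm{tors}}|$. The content of the Abert--Bergeron--Fraczyk--Gaboriau definition is exactly to package small cell counts together with bounded boundary norms into a single geometric property that persists along any Farber chain, and it is this combined control that makes the argument go through uniformly in the sequence.
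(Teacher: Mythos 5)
This theorem is not proved in the paper at all: it is quoted verbatim from Abert--Bergeron--Fraczyk--Gaboriau \cite[Theorem~10.20]{AbertBergeronFraczykGaboriau2021}, so the only meaningful comparison is with the source. Your outline does follow the broad shape of their argument (replace the chain complex of $\Gamma_k\backslash X$ by a small rebuilt complex; the rank bound gives the $\KK$-Betti number statement; a Hadamard-type bound on the invariant factors of $\partial_{j+1}$ gives the torsion statement), but two points are genuine gaps rather than omitted bookkeeping. First, your packaging of the rebuilding data --- ranks at most $\varepsilon\,[\Gamma:\Gamma_k]$ together with a norm bound $M=M(\varepsilon)$ --- does not suffice for the torsion claim: your final estimate is $\limsup_k \log|H_j(\Gamma_k,\ZZ)_{\mathrm{tors}}|/[\Gamma:\Gamma_k]\le \varepsilon\log M(\varepsilon)$, and since $M(\varepsilon)$ may blow up as $\varepsilon\to 0$ this need not tend to zero. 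The actual definition controls a combined quantity, roughly $\sum_{j\le\alpha}\operatorname{rank}(C^{(k)}_j)\bigl(1+\log\text{(norms of the maps in the rebuilding)}\bigr)\le \varepsilon\,[\Gamma:\Gamma_k]$, and it is exactly this joint control of rank and log-norm that makes the limit vanish; it cannot be recovered from separate bounds of the kind you posit.

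Second, the real content of the cited theorem is the production of such cheap rebuildings of $\Gamma_k\backslash X$ uniformly along an arbitrary Farber sequence, where the $\Gamma_k$ need not be normal and the quotient actions are only asymptotically free in the Benjamini--Schramm sense, starting from the group-level cheap $\alpha$-rebuilding property. Your opening sentence simply posits that the property ``should provide'' such complexes ``eventually along the Farber sequence'', which assumes precisely the hard step of the source's proof. Once that step is granted, your two displayed estimates are indeed the standard way to conclude, with the minor correction that type $\mathrm{F}_{\alpha+1}$ yields a free cocompact action on an $\alpha$-connected (not contractible) complex of dimension $\alpha+1$, which is all that is needed to compute $H_j(\Gamma_k,-)$ for $j\le\alpha$. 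As it stands, though, the proposal restates the conclusion of the key mechanism rather than proving it, and its quantitative formulation of that mechanism is too weak to close the torsion estimate.
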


We refer to \cite{AbertBergeronFraczykGaboriau2021} for the definition of a Farber sequence. Examples include decreasing sequences of finite index normal subgroups with trivial intersection.

\begin{prop}\cite[Corollary~10.13]{AbertBergeronFraczykGaboriau2021}\label{prop:Crpprop}
Let $\Gamma$ be a residually finite countable group and let $\alpha \in \NN$. The following statements hold.
\begin{enumerate}
\item Let $\Gamma' \subseteq \Gamma$ be a finite index subgroup. Then $\Gamma$ has the cheap $\alpha$-rebuilding property if and only if $\Gamma'$ does.\label{prop:Crpprop:1}
\item If $\Gamma$ has an infinite normal subgroup $N$ such that $\Gamma/N$ is of type $F_\alpha$ and $N$ has the cheap
$\alpha$-rebuilding property, then $\Gamma$ has the cheap $\alpha$-rebuilding property. \label{prop:crpprop:2}
\item For every $m \in \NN^*$ and every $\alpha \in \NN$, the group $\ZZ^m$ has the cheap $\alpha$-rebuilding property.\label{prop:Crpprop:3}
\item If $\Gamma$ is infinite and virtually polycyclic then it has the cheap $\alpha$-rebuilding property.\label{prop:Crpprop:4}
\end{enumerate}
\end{prop}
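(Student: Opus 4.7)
The plan is to establish the four statements in sequence, with parts (3) and (4) being iterative applications of parts (1) and (2).

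For part (1), I would argue directly from the definition of cheap $\alpha$-rebuilding in terms of $\Gamma$-CW complexes admitting low-cost rebuilding data. Given a finite-index subgroup $\Gamma' \leq \Gamma$, restriction of a $\Gamma$-CW complex to $\Gamma'$ multiplies both the number of orbits of cells and the cost of any rebuilding by $[\Gamma:\Gamma']$, and induction $\Gamma \times_{\Gamma'}(-)$ does the reverse. Since cheapness asks that a suitable ratio of rebuilding cost to complex size tend to zero, and that ratio is invariant under both operations, the property transfers in both directions.

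For part (2), the substantive combination theorem, I would pick a classifying space $EQ$ for $Q = \Gamma/N$ whose $\alpha$-skeleton is cocompact (possible since $Q$ is of type $F_\alpha$), and pull back along $\Gamma \to Q$ to obtain a $\Gamma$-CW complex whose cell stabilisers are conjugate to $N$. One then applies cheap $\alpha$-rebuildings of $N$ fibrewise over the finitely many $\Gamma$-orbits of cells up to dimension $\alpha$ and glues them equivariantly. Because there are only boundedly many orbits to worry about, the total cost can be made arbitrarily small, yielding a cheap $\alpha$-rebuilding of $\Gamma$. This is the heart of the rebuilding machinery of \cite{AbertBergeronFraczykGaboriau2021} and I would use it essentially as a black box.

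For part (3), I would induct on $m$. The base case $m = 1$ is direct: $\ZZ$ acts freely and cocompactly on $\RR$, and a cheap rebuilding in dimension one is produced by collapsing long intervals; higher dimensions are handled trivially since $\RR$ is contractible. For the inductive step, $\ZZ^m = \ZZ^{m-1} \times \ZZ$ is a central extension to which part (2) applies with $N = \ZZ^{m-1}$ (infinite, cheap $\alpha$-rebuilding by induction) and quotient $\ZZ$ of type $F_\infty$, hence $F_\alpha$. For part (4), by part (1) we may reduce to $\Gamma$ poly-$\ZZ$, since every polycyclic group contains a poly-$\ZZ$ subgroup of finite index. Induct on the Hirsch length $h$: the base case $h = 1$ is part (3); for $h > 1$, any infinite poly-$\ZZ$ group $\Gamma$ admits a short exact sequence $1 \to N \to \Gamma \to \ZZ \to 1$ with $N$ poly-$\ZZ$ of Hirsch length $h - 1$ (hence infinite), and part (2) applies. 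The main obstacle is part (2); parts (1), (3), and (4) are straightforward bookkeeping on top of it.
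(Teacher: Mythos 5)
The paper offers no proof of this proposition: it is cited verbatim from Abert--Bergeron--Fraczyk--Gaboriau (their Corollary~10.13), whose Theorem~10.9 the present paper records as \cref{thm:Crprebuild}. So there is no in-paper argument to compare against. Your reconstruction nevertheless follows the same architecture as the source. Part~(2) is indeed a direct application of \cref{thm:Crprebuild}: take a cocompact model for $EQ$ through dimension~$\alpha$, pull back along $\Gamma\to Q$ to get a contractible $\Gamma$-CW complex with finite $\alpha$-skeleton modulo $\Gamma$ whose cell stabilisers are all equal to $N$ (by normality, so ``conjugate to~$N$'' is automatic), and invoke the monotonicity that cheap $\alpha$-rebuilding implies cheap $\beta$-rebuilding for $\beta\le\alpha$ to feed the stabiliser hypothesis. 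Parts~(3) and~(4) then reduce by induction on rank and Hirsch length to the base case $\Gamma=\ZZ$, and your reduction of the virtually polycyclic case to the poly-$\ZZ$ case via part~(1) is correct.

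The one place your proposal understates the work is part~(1) and the closing assessment that it is ``straightforward bookkeeping on top of'' part~(2). Part~(1) is logically independent of part~(2) and is a substantive result in its own right: the cheap rebuilding property is an asymptotic condition quantified over Farber sequences, and the transfer between $\Gamma$ and $\Gamma'$ requires relating Farber sequences of the two groups and tracking the uniform constants in the definition of a rebuilding, not merely observing that restriction and induction scale orbit counts by the index. Your restriction/induction heuristic is the right intuition and the result is true, but if you are not treating part~(1) as a black box alongside \cref{thm:Crprebuild}, you should expect to supply genuinely more detail there than for parts~(3) and~(4).
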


Note that the ``infinite'' assumption at various points in this proposition was necessary: finite groups do not have the cheap $\alpha$-rebuilding property for any $\alpha$.

\begin{thm}\cite[Theorem~10.9]{AbertBergeronFraczykGaboriau2021}\label{thm:Crprebuild}
Let $\Gamma$ be a residually finite group acting on a CW-complex $\Omega$ in such a way that
any element stabilising a cell fixes it pointwise. Let $\alpha \in \NN$. Suppose that the following conditions hold:
\begin{enumerate}
\item $\Gamma\backslash\Omega$ has finite $\alpha$-skeleton;

\item $\Omega$ is $(\alpha-1)$-connected;

\item for each cell $\omega \in \Omega$ of dimension $j \le \alpha$ the stabiliser $\Stab_{\Gamma}(\omega)$ has the cheap $(\alpha-j)$-rebuilding property. 
\end{enumerate}
Then $\Gamma$ itself has the cheap $\alpha$-rebuilding property.
\end{thm}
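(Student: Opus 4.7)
The plan is to mirror the classical Brown--Haefliger result that transfers finiteness properties through actions on CW-complexes, with the property of being type $\mathrm{F}_\alpha$ replaced by the quantitatively richer cheap $\alpha$-rebuilding property. Standard Bredon-type arguments show how the homology of a group acting on a cocompact $(\alpha-1)$-connected CW-complex can be computed from the homologies of the cell stabilisers, via an equivariant spectral sequence whose $E^1_{j,*}$-term involves stabilisers of $j$-cells. The proof I would give refines this assembly so as to keep track of the additional quantitative complexity data that distinguishes the cheap rebuilding property from mere finiteness.

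First I would replace $\Omega$ by its $\alpha$-skeleton $\Omega^{(\alpha)}$: by hypothesis (1), $\Gamma\backslash\Omega^{(\alpha)}$ has only finitely many cells, and by hypothesis (2), $\Omega^{(\alpha)}$ is still $(\alpha-1)$-connected. Then for each $\Gamma$-orbit of $j$-cells with $j\le\alpha$, I would fix an orbit representative $\omega$ and invoke hypothesis (3) to fix a cheap $(\alpha-j)$-rebuilding of $\Stab_\Gamma(\omega)$, adapted to the sequence of finite-index subgroups of $\Stab_\Gamma(\omega)$ obtained by intersecting with a Farber chain of $\Gamma$.

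The heart of the argument is the assembly step: combine these local rebuildings into a global cheap $\alpha$-rebuilding of $\Gamma$. The key point is a degree-shift bookkeeping: a $j$-cell of $\Omega^{(\alpha)}$ contributes to degree $j$ through the cellular chain complex, while the $(\alpha-j)$-rebuilding of its stabiliser supplies the complementary $(\alpha-j)$ degrees, so the combined degrees reach exactly $\alpha$. The gluing data is prescribed by the attaching maps of $\Omega^{(\alpha)}$ and by induction from stabilisers to $\Gamma$, while cocompactness of $\Gamma\backslash \Omega^{(\alpha)}$ guarantees only finitely many orbits of cells appear in each dimension $\le\alpha$, so the assembled object has finite combinatorial size.

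The main obstacle I expect is the quantitative control inside this assembly: one must show that the precise cheapness bound is preserved or only mildly degraded under (i) induction of a stabiliser's rebuilding along a finite-index embedding, (ii) extension of scalars from $\Stab_\Gamma(\omega)$ to $\Gamma$, and (iii) the gluing of the resulting pieces across the cells in degrees $0,\dots,\alpha$. This is the genuine technical content of the statement and requires a careful multi-step bookkeeping of how the various complexities compose, which is exactly the refinement of the Bredon spectral sequence philosophy from \emph{finite dimensions} to \emph{controlled complexity}. Once this quantitative control is achieved, residual finiteness of $\Gamma$ is used to ensure the definitions of Farber sequence and cheap rebuilding can be applied, yielding the desired conclusion for $\Gamma$.
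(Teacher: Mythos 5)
This theorem is not proved in the paper at all: it is quoted verbatim as a citation of \cite[Theorem~10.9]{AbertBergeronFraczykGaboriau2021}, and the paper deliberately treats the cheap $\alpha$-rebuilding property as a black box --- the authors explicitly decline to even state its definition, listing instead only the consequences they need (of which this theorem is one). So there is no internal proof to compare your attempt against; the only meaningful comparison is with the argument in \cite{AbertBergeronFraczykGaboriau2021} itself, which is substantial (it occupies a significant portion of Section~10 of that paper).

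As for your sketch: it correctly identifies the overall Brown--Haefliger-type shape --- restrict to the $\alpha$-skeleton using hypotheses (1) and (2), use cocompactness to reduce to finitely many orbits of cells in each dimension $\le\alpha$, attach a cheap $(\alpha-j)$-rebuilding to each $j$-cell stabiliser, and assemble these along the equivariant cellular chain complex with the degree-shift bookkeeping $j+(\alpha-j)=\alpha$. That much is accurate as a description of what the theorem \emph{is}. But your attempt does not supply any of its actual content, and you say so yourself: the ``quantitative control inside this assembly'' that you defer --- compatibility of the rebuildings with a Farber chain of $\Gamma$ intersected down to the stabilisers, preservation or controlled degradation of the cheapness bounds under induction, extension, and gluing --- is precisely what separates the cheap rebuilding property from ordinary finiteness and is the entire point of the theorem. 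Because you never engage with the definition of a cheap $\alpha$-rebuilding (a pair of bounded-distortion maps between CW-complexes satisfying explicit size and Lipschitz estimates relative to the finite-index tower), there is no way to check that the estimates would close, and the inductive structure needed to make the assembly work is absent. What you have written is a statement of the strategy, not a proof; the gap is not a local fixable step but the whole argument.
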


\section{A combination theorem for the cheap rebuilding property of mapping tori}
\label{sec:combination}

Let $(G, \mathcal{F})$ be a free product. Recall that each element $[G_i] \in \mathcal{F}$ corresponds to the conjugacy class of a non-trivial finitely generated subgroup $G_i$ of $G$, with the possibility that $\mathcal{F} = \emptyset$. The main result of this section, \Cref{Thm:crpfreeproduct}, is a combination theorem which allows us to deduce the cheap rebuilding property for some mapping tori of $G$, assuming that it holds for the mapping tori of the factors $\mathcal{F}$. We will use the combination theorem in subsequent sections to prove the cheap $\alpha$-rebuilding property for a large family of mapping tori with polynomially-growing monodromy. 

\begin{thm}\label{Thm:crpfreeproduct}
    Let $G=G_1 \ast \ldots \ast G_k \ast F_N$ be a free product of residually finite groups. Fix $\alpha \in \NN$. Let $\phi$ be a polynomially-growing automorphism of $G$ which preserves the conjugacy classes of the factors $G_i$. Suppose that for every $i\in \{1,\ldots,k\}$, the group $G_i \rtimes_{\phi|_{G_i}} \ZZ$ has the cheap $\alpha$-rebuilding property. Then the group $G \rtimes_\phi \ZZ$ has the cheap $\alpha$-rebuilding property.
\end{thm}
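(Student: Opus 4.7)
The plan is to argue by induction on the rank $k+N$ of the free product $(G,\calf)$. The base case $k+N=1$ splits into two subcases: if $G=\ZZ$, then $G\rtimes_\phi\ZZ$ is virtually $\ZZ^2$ and hence has the cheap $\alpha$-rebuilding property by \Cref{prop:Crpprop:4}; if $G=G_1$, the conclusion is immediate from the hypothesis on $G_1\rtimes_{\phi|_{G_1}}\ZZ$.

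For the inductive step, I would apply \Cref{Prop:existenceBStree} to the outer class of $\phi$ to obtain a sporadic free factor system $\calf'$ of $(G,\calf)$ together with a power $m\in\NN$ such that $\phi^m$ preserves $\calf'$. Since $G\rtimes_{\phi^m}\ZZ$ embeds inside $G\rtimes_\phi\ZZ$ as a finite-index subgroup, \Cref{prop:Crpprop:1} allows me to replace $\phi$ by $\phi^m$. If $\calf'=\{[G]\}$, then $(G,\calf)$ is already sporadic of the form $(G_1,\{[G_1]\})$ and we are back in the base case; otherwise, one obtains a nontrivial sporadic splitting $G=A_1\ast A_2$ or $G=A_1\ast\ZZ$, with associated Bass--Serre tree $T$. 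The $\phi$-equivariant isometry of $T$ coming from $\phi$-invariance of $\calf'$ extends the $G$-action to a $(G\rtimes_\phi\ZZ)$-action on $T$, where $t$ acts by the isometry.

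Next, I would apply \Cref{thm:Crprebuild} to this action. After a further power of $\phi$ if necessary (still valid by \Cref{prop:Crpprop:1}), one may arrange that each $G$-orbit of cells is preserved setwise, so that every edge has $(G\rtimes_\phi\ZZ)$-stabiliser isomorphic to $\ZZ$, and every vertex with $G$-stabiliser $A_j$ has $(G\rtimes_\phi\ZZ)$-stabiliser of the form $A_j\rtimes\ZZ$, namely the mapping torus of a representative of the outer class of $\phi|_{A_j}$. Edge stabilisers have the cheap $\alpha$-rebuilding property by \Cref{prop:Crpprop:3}; the tree $T$ is contractible, hence $(\alpha-1)$-connected; and the quotient $(G\rtimes_\phi\ZZ)\backslash T$ is a finite graph, giving finite $\alpha$-skeleton.

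The crux is showing that each vertex stabiliser $A_j\rtimes\ZZ$ has the cheap $\alpha$-rebuilding property, which should follow from the inductive hypothesis applied to the Kurosh-type decomposition $A_j=G_{j_1}\ast\ldots\ast G_{j_{r_j}}\ast F_{N_j}$ of rank $r_j+N_j<k+N$ (strict since $A_j$ is a proper free factor in the nondegenerate sporadic cases). One must verify that $\phi|_{A_j}$ is polynomially growing on $A_j$, using bi-Lipschitz equivalence of word norms on free factors, and that it preserves the conjugacy classes of the $G_{j_i}$ in $A_j$, using that $N_G(A_j)=A_j$ and $N_{A_j}(G_{j_i})=G_{j_i}$ for proper free factors of free products; the hypothesis of the theorem for $G$ then supplies the required cheap $\alpha$-rebuilding for $G_{j_i}\rtimes\ZZ$ (invariantly under the ambiguity up to inner automorphism). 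The main obstacle is precisely this bookkeeping of the inductive step: verifying that polynomial growth and the self-normalising properties of free factors allow the hypotheses to descend cleanly to $A_j$, so that the induction closes.
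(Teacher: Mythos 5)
Your proposal is correct and follows essentially the same route as the paper: induction on the Grushko rank, passing to a power of $\phi$ preserving the sporadic free factor system from \Cref{Prop:existenceBStree}, and applying \Cref{thm:Crprebuild} to the induced action of $G\rtimes_\phi\ZZ$ on the Bass--Serre tree, with infinite cyclic edge stabilisers and the inductive hypothesis handling vertex stabilisers. The ``bookkeeping'' you flag (polynomial growth of the restriction and preservation of the conjugacy classes of the Kurosh factors inside a vertex group) is indeed the point where the induction closes; the paper treats it briefly in the same way, so this is a matter of detail rather than a different argument.
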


    We note that the groups $G_i$ in the free product are not required to be freely irreducible. 

\begin{proof}
    The proof is by induction on the Grushko rank $k+N$ of $G$. If $k=1$ and $N=0$, then the group $G \rtimes_\phi \ZZ$ has the cheap $\alpha$-rebuilding property by the hypothesis. If $k=0$ and $N=1$, then $G \rtimes_\phi \ZZ$ is virtually $\ZZ^2$. By \Cref{prop:Crpprop}~\eqref{prop:Crpprop:1}~\eqref{prop:Crpprop:3}, for every $\alpha \in \NN$, the group $G \rtimes_\phi \ZZ$ has the cheap $\alpha$-rebuilding property.
    
    Suppose now that $k+N\ge 2$. Let $\Phi$ be the outer class of $\phi$ and let $\calf$ be a sporadic free factor system given by \Cref{Prop:existenceBStree}. Let $T$ be the canonical Bass--Serre tree of $G$ associated to $\calf$. The tree $T$ has a unique orbit of edges and its edge stabilisers in $G$ are trivial. In particular, vertex stabilisers in $G$ of $T$ are proper free factors of $G$, hence have a smaller Grushko rank than the one of $G$. Since $T$ is canonical, it is preserved by $\Phi$. Up to taking a power of $\Phi$ we may suppose that $\Phi$ acts trivially on the underlying graph of $G \backslash T$. 
    
    The actions of $G$ and $\Phi$ on $T$ induce an action of $G \rtimes_{\phi} \ZZ$ on $T$. Edge stabilisers in $G \rtimes_{\phi} \ZZ$ are infinite cyclic and the stabiliser of a vertex $v$ of $T$ is isomorphic to $\Stab(v) \rtimes_{\phi_v} \ZZ$, where $\Stab(v)$ is the vertex stabiliser of $v$ in $G$ and $\phi_v := \phi|_{\Stab(v)}$ is the automorphism of $\Stab(v)$ induced by a representative of $\Phi$ preserving $\Stab(v)$.

    We now prove that $G \rtimes_{\phi} \ZZ$ has the cheap $\alpha$-rebuilding property by applying \Cref{thm:Crprebuild} to the action of $G \rtimes_{\phi} \ZZ$ on $T$. Since the action is cocompact and since $T$ is a tree, it suffices to prove that the stabiliser of any vertex of $T$ has the cheap $\alpha$-rebuilding property and that the stabiliser of any edge of $T$ has the cheap $(\alpha-1)$-rebuilding property.
    
    Since edge stabilisers in $G \rtimes_{\phi} \ZZ$ are infinite cyclic, they have the cheap $(\alpha-1)$-rebuilding property by \Cref{prop:Crpprop}~\eqref{prop:Crpprop:3}. Let $v$ be a vertex of $T$. Note that, since $\phi$ is a polynomially-growing automorphism, so is $\phi_v \in \Aut(\Stab(v))$. Since the Grushko rank of $\Stab(v)$ is smaller than the one of $G$, by induction hypothesis, the group $\Stab(v) \rtimes_{\phi_v} \ZZ$ has the cheap $\alpha$-rebuilding property. Thus, by \Cref{thm:Crprebuild}, the group $G \rtimes_{\phi} \ZZ$ has the cheap $\alpha$-rebuilding property, which concludes the proof. 
\end{proof}

\begin{remark}
    \cref{Thm:crpfreeproduct} recovers the main result of \cite{AndrewHughesKudlinska2022} which states that any free-by-cyclic group $F_n \rtimes_{\phi} \Z$ with polynomially-growing monodromy $\phi \in \Aut(F_n)$ has the cheap $\alpha$-rebuilding property for every $\alpha \in \NN$.
\end{remark} 

\begin{remark}
If $G$ is infinitely-ended, residually finite and accessible then $G$ admits a finite index subgroup $H$ which is a free product of one-ended groups and a finitely generated free group, and the suspension $G \rtimes_\phi \ZZ$ has a finite index subgroup $H \rtimes_{\phi'} \ZZ$. Hence, by \Cref{prop:Crpprop}~\eqref{prop:Crpprop:1}, one can apply \cref{Thm:crpfreeproduct} in the setting of infinitely-ended, accessible groups which perhaps have torsion.
\end{remark}

\section{The cheap rebuilding property for residually finite (relatively) hyperbolic groups}
\label{sec:tf}

In this section,we prove the cheap rebuilding property for mapping tori of residually finite (relatively) hyperbolic groups. \Cref{Thm:crpfreeproduct} is the main step in order to prove the cheap rebuilding property for infinitely-ended hyperbolic groups. The one-ended case requires the use of the JSJ decomposition of the group, whose properties are presented after \Cref{lem:surfacegroupcrp}, following Guirardel--Levitt~\cite{guirardellevitt2017jsj}.

\begin{lemma}\label{lem:surfacegroupcrp}
    Let $S$ be a compact, connected hyperbolic surface and let $G=\pi_1(S)$. Let $D \in \MCG(S)$ be a Dehn multi-twist. For every $\alpha \in \NN$, the group $G \rtimes_D \ZZ$ has the cheap $\alpha$-rebuilding property.
\end{lemma}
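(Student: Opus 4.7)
The plan is to realise $G\rtimes_D\ZZ$ as the fundamental group of a graph of groups whose pieces are, up to isomorphism, direct products of free groups with $\ZZ$, and then apply \Cref{thm:Crprebuild}. Geometrically this splitting is the JSJ decomposition of the graph manifold $M_D=S\times[0,1]/(x,1)\sim(D(x),0)$.

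Let $\calc$ be the multi-curve on which $D$ is supported, chosen without repeated or boundary-parallel components, and pick a representative homeomorphism of $D$ which is the identity outside a regular neighbourhood $N(\calc)$; then $D$ preserves each component of $S\setminus N(\calc)$ pointwise and each curve of $\calc$ setwise. Cutting $S$ along $\calc$ yields subsurfaces $S_1,\dots,S_m$, each with non-empty boundary, so each $\pi_1(S_i)$ is a finitely generated free group. The dual graph of $\calc$ realises $G$ as the fundamental group of a graph of groups with vertex groups $\pi_1(S_i)$ and edge groups the infinite cyclic subgroups generated by the curves in $\calc$. Because $D$ acts trivially (or at worst by inner automorphisms) on every vertex and edge group of this splitting, $G\rtimes_D\ZZ$ inherits a graph-of-groups structure with vertex groups $\pi_1(S_i)\times\ZZ$ and edge groups $\ZZ^2$; let $T$ denote the associated Bass--Serre tree.

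I then apply \Cref{thm:Crprebuild} to the cocompact action of $G\rtimes_D\ZZ$ on the contractible tree $T$. Conditions (1) and (2) of \Cref{thm:Crprebuild} are immediate for every $\alpha\in\NN$. For condition (3), each edge stabiliser is $\ZZ^2$ and therefore has the cheap $\alpha'$-rebuilding property for every $\alpha'$ by \Cref{prop:Crpprop}~\eqref{prop:Crpprop:3}; each vertex stabiliser $\pi_1(S_i)\times\ZZ$ contains the infinite normal subgroup $\ZZ$ with quotient $\pi_1(S_i)$, which is finitely generated free and hence of type $\mathrm F$, so \Cref{prop:Crpprop}~\eqref{prop:Crpprop:3} combined with~\eqref{prop:crpprop:2} gives that the vertex stabiliser has the cheap $\alpha$-rebuilding property. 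The degenerate case $\calc=\emptyset$, in which $D$ is isotopic to the identity and $G\rtimes_D\ZZ\cong G\times\ZZ$, is handled in one step by the same instance of \Cref{prop:Crpprop}~\eqref{prop:crpprop:2}, taking $N=\ZZ$ and using that the surface group $G$ is of type $\mathrm F$. The only step that needs careful bookkeeping is verifying that the representative of $D$ supported in $N(\calc)$ really does induce the identity (or an inner automorphism) on each vertex and edge group of the splitting of $G$; this reduces to the standard fact that a Dehn twist is trivial in the mapping class group of each complementary subsurface, together with the compatibility of the resulting outer automorphism with the Bass--Serre structure.
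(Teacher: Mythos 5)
Your proof takes essentially the same route as the paper's: both split $G$ along the multi-curve supporting $D$, extend the action of $G$ on the dual Bass--Serre tree to the mapping torus $G\rtimes_D\ZZ$, and then invoke \Cref{thm:Crprebuild} together with \Cref{prop:Crpprop}~\eqref{prop:crpprop:2} and~\eqref{prop:Crpprop:3} on the $\ZZ^2$ edge stabilisers and $\pi_1(S_i)\times\ZZ$ vertex stabilisers. Your write-up is slightly more explicit about the degenerate case $\calc=\emptyset$ and about why the induced automorphism is trivial on the pieces, but the argument is the same.
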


\begin{proof}
    Let $\beta=(\beta_1,\ldots,\beta_k)$ be the collection of pairwise non-isotopic simple closed curves in $S$ associated to the multi-twist $D$. Then $D$ preserves each connected component of the complement $S - \bigcup_{i = 1}^k \beta_i$. Hence, the natural action of $G$ on the simplicial tree $T$ dual to the collection of lifts of the loops $\beta_i$ in the universal cover of $S$, extends to an action of the extension $G \rtimes_{D} \Z$. The edge stabilisers of the action are isomorphic to $\Z^2$ and the vertex stabilisers are isomorphic to $\pi_1(S_0) \times \Z$, where $S_0$ is a connected component of the closure of $S - \bigcup_{i = 1}^k \beta_i$. In particular, by \Cref{prop:Crpprop}~\eqref{prop:crpprop:2} and~\eqref{prop:Crpprop:3}, the edge and vertex stabilisers satisfy the cheap $\alpha$-rebuilding property for every $\alpha$. Thus, by \Cref{thm:Crprebuild}, the group $G \rtimes_D \ZZ$ has the cheap $\alpha$-rebuilding property for every $\alpha \in \NN$. 
\end{proof}

Let $\mathcal{G}$ be the family of groups which are hyperbolic relative to a finite family of virtually polycyclic groups. For each $G \in \mathcal{G}$ we may assume that the peripheral family $\mathcal{P}$ does not contain virtually cyclic groups, since removing them does not destroy relative hyperbolicity (see \cite[Section~9.3]{guirardellevitt2017jsj}). The family $\mathcal{G}$ includes the class of toral relatively hyperbolic groups, which are torsion-free groups hyperbolic relative to a finite collection $\calp$ of finitely generated abelian subgroups. 

We say a group $G$ is \emph{one-ended relative to a collection of subgroups $\calp$} if there does not exist a splitting of $G$ over finite subgroups such that each group in $\calp$ is conjugate into some vertex group. Note that a one-ended group is automatically one-ended relative to every collection of subgroups. We write $\Aut(G; \mathcal{P})$ to denote the group of automorphisms of $G$ which preserve $\mathcal{P}$. 

Let $G \in \mathcal{G}$ and suppose that $G$ is one-ended relative to $\calp$. By the work of Guirardel and Levitt~\cite[Corollary~9.20]{guirardellevitt2017jsj} (see also~\cite[Section~3.3]{GuirardelLevitt2015}), there is a canonical JSJ tree $T_G$ for $G$, that is, a simplicial tree equipped with an action of $G$ which is preserved by the elements of $\Aut(G; \mathcal{P})$. The group $\Aut(G; \mathcal{P})$ has a finite index subgroup $\calk(T_G)$ which acts as the identity on the underlying graph of $G \backslash T_G$. Edge stabilisers of $T_G$ in $G$ are virtually polycyclic. If we further assume that $G$ is torsion free then the stabiliser $\Stab_G(v)$ of a vertex $v$ in $T_G$ satisfies one of the following:
\begin{enumerate}
    \item the group $\Stab_G(v)$ is isomorphic to the fundamental group of a compact hyperbolic surface $S$ and the image of the natural homomorphism $\calk(T_G) \to \Out(\Stab_G(v))$ is contained in the mapping class group $\MCG(S)$ of $S$;
    \item there exists $[P] \in \calp$ with $\Stab_G(v)=P$. In particular, the group $\Stab_G(v)$ is virtually polycyclic;
    \item the image of the natural homomorphism  $\calk(T_G) \to \Out(\Stab_G(v))$ is finite.
\end{enumerate}

Recall that $\mathcal{G}$ denotes the family of groups which are hyperbolic relative to a finite family of virtually polycyclic groups, and we assume with no loss of generality that the peripheral subgroup are not virtually cyclic. Let $\mathcal{G}_{rf}$ denote the subset of the groups in $\mathcal{G}$ which are  residually finite.

\begin{lemma}\label{lemma:vtorsionfree}
    For any group $G \in \mathcal{G}_{rf}$, there exists a finite-index subgroup $G' \leq G$ such that $G' \in \mathcal{G}_{rf}$ and $G'$ is torsion free.
\end{lemma}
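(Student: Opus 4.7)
The plan is to produce $G'$ as a finite-index normal subgroup of $G$ that contains no non-trivial torsion element of $G$. Since any torsion element of $G'$ is a torsion element of $G$, this forces $G'$ to be torsion-free. The remaining conditions for $G' \in \mathcal{G}_{rf}$ are then closure properties: residual finiteness is inherited by subgroups, and finite-index subgroups of a group hyperbolic relative to a finite family of virtually polycyclic subgroups are again hyperbolic relative to a finite family of virtually polycyclic subgroups, with new peripherals given by representatives of the $G'$-conjugacy classes of intersections $G' \cap P^g$, each of which is a finite-index subgroup of the virtually polycyclic group $P^g$ and hence still virtually polycyclic.

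The key input is that $G$ has only finitely many conjugacy classes of finite subgroups. Each virtually polycyclic peripheral contains a torsion-free (poly-$\ZZ$) subgroup of finite index, so all of its finite subgroups inject into a finite quotient and consequently fall into finitely many conjugacy classes. Combined with the classical analogue for hyperbolic groups, a standard extension to the relatively hyperbolic setting then ensures the same finiteness for $G$ itself.

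Given this, choose representatives $F_1, \ldots, F_n$ of the conjugacy classes of non-trivial finite subgroups of $G$ and enumerate their non-identity elements $f_1, \ldots, f_m$. By residual finiteness, for each $j$ there exists a finite-index normal subgroup $N_j \trianglelefteq G$ with $f_j \notin N_j$. Setting $G' := \bigcap_{j=1}^m N_j$ produces a finite-index normal subgroup of $G$; any non-trivial torsion element of $G'$ would be $G$-conjugate to some $f_j$, and normality of $G'$ would then force $f_j \in G' \subseteq N_j$, contradicting the choice of $N_j$. Hence $G'$ is torsion-free and, together with the closure properties above, lies in $\mathcal{G}_{rf}$. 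The main obstacle in this plan is verifying the finiteness of conjugacy classes of finite subgroups; once that is in hand, the remainder is a routine application of residual finiteness and the standard stability of relative hyperbolicity under passage to finite-index subgroups.
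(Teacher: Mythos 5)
Your approach matches the paper's in its overall structure: both arguments reduce to showing that $G$ has only finitely many conjugacy classes of torsion elements (the paper phrases this in terms of finite-order elements, you in terms of finite subgroups, which amounts to the same thing here), and then use residual finiteness to produce a finite-index normal subgroup avoiding all of them. The paper establishes the finiteness via the action of $G$ on a relative Rips complex (citing Dahmani), which gives finitely many finite subgroups $F_1, \ldots, F_k$ so that every torsion element is conjugate into some $F_i$ or a peripheral, and then invokes a cited theorem of Baumslag for the virtually polycyclic peripherals. Your reliance on a ``standard extension to the relatively hyperbolic setting'' is playing the role of the Rips-complex step.

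The one place your argument is not tight is the peripheral case. You claim that because each virtually polycyclic peripheral contains a torsion-free finite-index subgroup, its finite subgroups ``inject into a finite quotient and consequently fall into finitely many conjugacy classes.'' The injection bounds the \emph{order} of finite subgroups, but a bound on the order does not in general bound the number of conjugacy classes: many pairwise non-conjugate finite subgroups of a group can map injectively onto the same subgroup of a finite quotient. That virtually polycyclic groups have finitely many conjugacy classes of finite subgroups is true, but it is a substantive theorem and does not follow from the order bound alone. The paper avoids this gap by citing a specific result (Baumslag, Theorem~7.1) rather than attempting a self-contained argument. You should either cite that theorem (or an equivalent) or give an honest proof of the finiteness claim for virtually polycyclic groups; as written, this step is where the proposal falls short.
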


\begin{proof}
    It is well known, using for instance the action of $G$ on a relative Rips complex (see~\cite{dahmani2003classifying}), that for a relatively hyperbolic group $G$, there exists a finite number of finite subgroups $F_1, \ldots, F_k$ of $G$, such that any finite-order element $g \in G$ can be conjugated into some $F_i$, or into a peripheral subgroup of $G$. Since polycyclic groups contain finitely many conjugacy classes of finite order elements by \cite[Theorem~7.1]{baumslag1991algorithmic}, it follows that $G$ also has finitely many conjugacy classes of finite order elements. Now since $G$ is residually finite, there exists a finite index normal subgroup $G'$ of $G$ which does not contain any of the finite conjugacy classes, and thus is torsion free. Moreover, $G'$ is clearly also residually finite and hyperbolic relative to a finite collection of virtually polycyclic subgroups.
\end{proof}

\begin{thm}\label{thm:hypgroupcrp}
     Let $G \in \mathcal{G}_{rf}$ and let $\Phi=[\phi] \in \Out(G)$ be a polynomially-growing outer automorphism. Then $G \rtimes_{\phi} \ZZ$ has the cheap $\alpha$-rebuilding property for all $\alpha$.
\end{thm}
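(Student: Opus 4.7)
My approach is to combine the two main tools developed earlier in the paper: \Cref{Thm:crpfreeproduct} for free-product decompositions, and the canonical JSJ decomposition of Guirardel--Levitt for the one-ended case. First I would pass to a $\phi$-invariant finite-index torsion-free subgroup using \Cref{lemma:vtorsionfree} together with the fact that a finitely generated residually finite group has only finitely many subgroups of each finite index; \Cref{prop:Crpprop}\eqref{prop:Crpprop:1} then reduces us to the torsion-free case. Since the peripheral structure $\calp$ is canonically determined by the relatively hyperbolic structure (peripherals being virtually polycyclic but not virtually cyclic), $\phi$ permutes the finitely many conjugacy classes in $\calp$, so after passing to a further power of $\phi$ we may assume each $[P]\in\calp$ is $\phi$-invariant.

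If $G$ is infinitely-ended relative to $\calp$, I would use the canonical Grushko decomposition $G = G_1 \ast \cdots \ast G_k \ast F_N$ relative to $\calp$, in which each $G_i$ is either one-ended relative to $\calp|_{G_i}$ or conjugate into a peripheral. After a further power, $\phi$ preserves the conjugacy classes of all factors; \Cref{Thm:crpfreeproduct} then reduces CRP for $G \rtimes_\phi \ZZ$ to CRP for each $G_i \rtimes_{\phi|_{G_i}} \ZZ$. Peripheral factors yield virtually polycyclic extensions with CRP by \Cref{prop:Crpprop}\eqref{prop:Crpprop:4}, while one-ended factors are handled by the next case via induction on the relative Grushko rank.

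For the one-ended case I would use the canonical JSJ tree $T_G$ of $(G, \calp)$. After a further power, $\phi$ lies in the finite-index subgroup $\calk(T_G)$ acting trivially on the underlying graph $G \backslash T_G$, extending the $G$-action to a cocompact action of $G \rtimes_\phi \ZZ$ on $T_G$; I would then apply \Cref{thm:Crprebuild}. Edge stabilisers are virtually-polycyclic-by-$\ZZ$, hence virtually polycyclic with CRP by \Cref{prop:Crpprop}\eqref{prop:Crpprop:4}. Vertex stabilisers take the form $\Stab_G(v) \rtimes_{\phi_v} \ZZ$, and CRP is verified in the three JSJ cases: surface vertices use quasi-convexity of $\pi_1(S)$ in $G$ to descend polynomial growth to $\phi_v \in \MCG(S)$, so by Nielsen--Thurston $\phi_v$ is a Dehn multi-twist after a power and \Cref{lem:surfacegroupcrp} applies; peripheral vertices give virtually polycyclic extensions; and rigid vertices exploit finiteness of the image $\calk(T_G) \to \Out(\Stab_G(v))$, so after a power $\phi_v$ is inner and the extension is isomorphic to $\Stab_G(v) \times \ZZ$, to which \Cref{prop:Crpprop}\eqref{prop:crpprop:2} applies with $N = \ZZ$ and $\Stab_G(v)$ of type $F$ as a relatively quasi-convex subgroup of the type-$F$ group $G$.

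The subtlest point is the rigid case in the JSJ: rigid vertex groups can be essentially arbitrary quasi-convex subgroups of $G$, but the finiteness of $\calk(T_G) \to \Out(\Stab_G(v))$ lets us replace $\phi_v$ by an inner automorphism after passing to a power, reducing the extension to the direct product $\Stab_G(v) \times \ZZ$ and thereby enabling \Cref{prop:Crpprop}\eqref{prop:crpprop:2}. A secondary technical step, needed in both the surface and rigid cases, is to verify that polynomial growth of $\phi$ on $G$ descends to polynomial growth of $\phi_v$ on each vertex group, which relies on the undistorted embedding of (relatively) quasi-convex JSJ vertex groups.
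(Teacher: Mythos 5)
Your approach matches the paper's: reduce to the torsion-free case via \Cref{lemma:vtorsionfree}, handle infinitely-ended $G$ via \Cref{Thm:crpfreeproduct}, and apply \Cref{thm:Crprebuild} to the canonical Guirardel--Levitt JSJ tree in the one-ended case, analysing surface, peripheral, and rigid vertex stabilisers exactly as the paper does (Dehn multi-twist plus \Cref{lem:surfacegroupcrp}, virtually polycyclic plus \Cref{prop:Crpprop}\eqref{prop:Crpprop:4}, and direct product plus \Cref{prop:Crpprop}\eqref{prop:crpprop:2}, respectively). The points where you are more explicit --- making the torsion-free subgroup $\phi$-invariant using finiteness of bounded-index subgroups, passing to a power before invoking Nielsen--Thurston, and justifying that polynomial growth descends to JSJ vertex groups via undistortedness --- are all treated implicitly in the paper, so this is essentially the same proof; the one cosmetic divergence is that in the rigid case the paper concludes $\Stab_G(v)$ is of type $F_\infty$ by noting $\Stab_G(v)\in\mathcal{G}$ and citing Dahmani, whereas you appeal to relative quasi-convexity in the type-$F$ group $G$, which is a slightly less direct route to the same fact.
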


\begin{proof}
   By \cref{lemma:vtorsionfree}, we may assume that $G$ is torsion free. By \Cref{Thm:crpfreeproduct}, it suffices to prove the result when $G$ is one ended. Let $T_G$ be the JSJ tree associated to $G$ described above. By \cite[Lemma~3.2]{MinasyanOsin2012}, we may pass to a power of $\Phi$ which fixes the conjugacy class of each group in the peripheral system $\mathcal{P}$. That is, up to finite index (applying \Cref{prop:Crpprop}~\eqref{prop:Crpprop:1}) we can assume $\phi$ is an element of $\Aut(G;\calp)$.

   Since $T_G$ is preserved by $\Aut(G; \mathcal{P})$, the action of $G$ and $\phi$ on $T_G$ induces an action of $G \rtimes_{\phi} \ZZ$ on $T_G$. Up to taking a power of $\phi$, we may suppose that $\phi \in \calk(T_G)$.

   We prove \Cref{thm:hypgroupcrp} by applying \Cref{thm:Crprebuild} to the action of $G \rtimes_{\phi} \ZZ$ on $T_G$. As in the proof of \Cref{Thm:crpfreeproduct}, it suffices to prove that, for every cell $\omega \in T_G$, the group $\Stab(\omega)$ has the cheap $\alpha$-rebuilding property for every $\alpha \in \NN$. 

   Edge stabilisers in $G \rtimes_{\phi} \ZZ$ are virtually polycyclic and infinite since $G \rtimes_{\phi} \ZZ$ is one-ended. Thus, by \Cref{prop:Crpprop}~\eqref{prop:Crpprop:4}, they have the cheap $\alpha$-rebuilding property for every $\alpha \in \NN$.  
   
   Let $v \in T_G$.  There are three cases for the vertex stabilisers.

    \paragraph{\underline{\textbf{Case 1:}}} \emph{$\Stab_G(v)$ is the fundamental group of a compact hyperbolic surface $S$ and $\calk(T_G)\to\Out(\Stab_G(v))$ has image contained in $\MCG(S)$.}

    Since $\phi$ is polynomially growing, its image in $\MCG(S)$ is in fact a Dehn multi-twist $D$. Thus, the stabiliser in $G \rtimes_{\phi} \ZZ$ of $v$ is isomorphic to $\pi_1(S) \rtimes_{D} \ZZ$. By \Cref{lem:surfacegroupcrp}, it has the cheap $\alpha$-rebuilding property for every $\alpha \in \NN$. \hfill$\blackdiamond$
   
   \paragraph{\underline{\textbf{Case 2:}}}  \emph{$\Stab_G(v)$ is virtually polycyclic.}
   
   Here the stabiliser of $v$ in $G \rtimes_{\phi} \ZZ$ is virtually polycyclic-by-$\Z$ (hence, polycyclic).  Thus, it has the cheap $\alpha$-rebuilding property for every $\alpha \in \NN$ by \Cref{prop:Crpprop}~\eqref{prop:Crpprop:4}. \hfill$\blackdiamond$
   
   \paragraph{\underline{\textbf{Case 3:}}}  \emph{The image of the natural homomorphism $\calk(T_G)\to \Out(\Stab_G(v))$ is finite.}

   Up to taking a power of $\phi$, we may suppose that the stabiliser of $v$ in $G \rtimes_{\phi} \ZZ$ is isomorphic to $\Stab_G(v) \times \ZZ$. By for instance~\cite[Lemma~3.8]{GuirardelLevitt2015}, the group $\Stab_G(v)$ belongs to $\mathcal{G}$. By results of Dahmani~\cite[Theorem~0.1]{dahmani2003classifying}, the group $\Stab_G(v)$ is of type $F_\infty$. By \Cref{prop:Crpprop}~\eqref{prop:crpprop:2}, for every $\alpha \in \NN$, the group $\Stab_G(v) \times \ZZ$ has the cheap $\alpha$-rebuilding property. \hfill$\blackdiamond$

   Thus, \Cref{thm:Crprebuild} implies that $G \rtimes_\phi \ZZ$ has the cheap $\alpha$-rebuilding property for every $\alpha \in \NN$. 
\end{proof}

\begin{remark}
    The restriction to virtually polycyclic subgroups is also used in Case (3); to deduce that $\Stab_G(v)$ is of type $F_\infty$ requires that arbitrary subgroups in $\calp$ themselves are of finite type.
\end{remark}

\section{Right angled Artin and Coxeter groups} \label{sec RA*Gs}

Let $L$ be a flag complex and recall that $A_L$ and $W_L$, respectively, denote the \emph{right-angled Artin} and \emph{right-angled Coxeter groups} on $L$. The following maps induce automorphisms of both the right-angled Artin group $A(L)$ and the right-angled Coxter group $W(L)$:
\begin{enumerate}
    \item \emph{graph automorphisms}, which are the automorphisms induced by automorphisms of $L$;
    \item \emph{inversions}, which send $v\mapsto v^{-1}$ and $u\mapsto u$ for $u\neq v$ and $u,v\in L^{(0)}$;
    \item \emph{partial conjugations}, labelled by $k_{w,C}$ for $w\in L^{(0)}$ and a connected component $C$ of $L\backslash \st(w)$, and defined as $k_{w,C}(u) = w^{-1}uw$ if $u\in C^{(0)}$ and $k_{w,C}(u) =u$ if $u\in L^{(0)}\backslash C$; and
    \item \emph{folds}, labelled by $\tau_{v,w}$ for any $v,w \in L^{(0)}$ with $\lk v\subseteq \lk w$, and defined by $\tau_{v,w}(v) =vw$ and $\tau_{v,w}(u) =u$ for all $u\in L^{(0)}\backslash\{v\}$.
\end{enumerate}
We say an automorphism of $A_L$ is \emph{untwisted} if it is contained in the subgroup $U(L)\leqslant\Aut(A_L)$ which is generated by graph automorphisms, inversions, partial conjugations and folds. We define the subgroup of untwisted automorphisms of the right angled Coxeter groups analogously. Note that by \cite[Proposition~A(3)]{Fioravanti22} untwisted automorphisms of $A_L$ are exactly the automorphisms which preserve the standard coarse median structure on $A_L$.  We will not use this fact or any results about coarse medians explicitly but we note that it underpins much of our work in this section.

\begin{thm}\label{thm:RAAGs:rebuild}
    Let $L$ be a flag complex on $[m]$ and let $\Gamma=A_L\rtimes_\phi \Z$.  If $\phi$ is an untwisted and polynomially-growing automorphism of $A_L$, then $\Gamma$ has the cheap $\alpha$-rebuilding property for all $\alpha$.
\end{thm}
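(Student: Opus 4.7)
The plan is to argue by induction on $m$, the number of vertices of $L$.  The base cases $m \leq 1$ are immediate: $A_L$ is trivial or infinite cyclic and $\Gamma$ is finite or isomorphic to $\ZZ^2$, and the cheap $\alpha$-rebuilding property for all $\alpha \in \NN$ follows from \Cref{prop:Crpprop}\eqref{prop:Crpprop:1} and \eqref{prop:Crpprop:3}.

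For the inductive step, the plan is to mirror the strategy used in the proofs of \Cref{Thm:crpfreeproduct} and \Cref{thm:hypgroupcrp}: produce a $\phi$-invariant splitting of $A_L$ as a graph of groups with controlled edge groups and vertex groups of smaller complexity, extend the $A_L$-action on the associated Bass-Serre tree $T$ to a $\Gamma$-action, and apply \Cref{thm:Crprebuild}.  After possibly replacing $\phi$ by a finite power (allowed by \Cref{prop:Crpprop}\eqref{prop:Crpprop:1}), I would use Fioravanti's work on coarse median preserving automorphisms of $A_L$ --- which, by \cite{Fioravanti22}, are precisely the untwisted automorphisms --- to produce such a splitting, with vertex groups of the form $A_{L'}$ for proper induced subcomplexes $L' \subsetneq L$ on which $\phi$ restricts to a polynomially growing untwisted automorphism, and edge groups which are finitely generated abelian.

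Given such a splitting, edge stabilisers in $\Gamma$ are extensions of finitely generated abelian groups by $\ZZ$, hence virtually polycyclic, and therefore have the cheap $\beta$-rebuilding property for every $\beta \in \NN$ by \Cref{prop:Crpprop}\eqref{prop:Crpprop:4}.  Vertex stabilisers in $\Gamma$ are mapping tori of strictly smaller RAAGs under polynomially growing untwisted automorphisms, and thus have the cheap $\alpha$-rebuilding property by the inductive hypothesis.  An application of \Cref{thm:Crprebuild} to the $\Gamma$-action on $T$ would then complete the inductive step.

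The main obstacle is the production of the $\phi$-invariant splitting.  The case of disconnected $L$ reduces immediately to \Cref{Thm:crpfreeproduct}, since $A_L$ then decomposes as a non-trivial free product whose factors are permuted, and hence (after passing to a power) preserved, by $\phi$.  The substantive work is in the connected case: one needs a RAAG analogue of \Cref{Prop:existenceBStree} asserting that a polynomially growing untwisted automorphism cannot act ``fully irreducibly'' on all relevant splittings, thereby forcing it to preserve one whose vertex groups are smaller parabolic subgroups.  The polynomial growth hypothesis should play a role analogous to \Cref{lem:polynonfullyirred}, ruling out stretching behaviour on any invariant \mbox{(arational-type)} limiting object in Fioravanti's framework.
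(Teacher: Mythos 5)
Your overall architecture (induction on $m$, build a $\phi$-invariant complex with parabolic stabilisers of lower complexity, apply \Cref{thm:Crprebuild}) matches the paper, and your handling of the freely reducible ($L$ disconnected) case via \Cref{Thm:crpfreeproduct} is exactly right. But there are three substantive gaps in the remaining ``connected'' part of your argument.

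First, you claim the edge groups in the $\phi$-invariant splitting are finitely generated abelian and deduce the rebuilding property for their mapping tori from \Cref{prop:Crpprop}\eqref{prop:Crpprop:4}. This is false: the splitting supplied by \cite[Proposition~D]{Fioravanti22} is an amalgam $A_J \ast_{A_{J\cap K}} A_K$ over a parabolic $A_{J \cap K}$, which is a general RAAG on a proper non-empty subcomplex and need not be abelian. The edge stabiliser in $\Gamma$ is $A_{J\cap K}\rtimes_{\phi|_{A_{J\cap K}}}\Z$ and must be handled by the inductive hypothesis, not by virtual polycyclicity.

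Second, your plan does not cover the case where $L$ is connected but $A_L$ is a non-trivial direct product. If $A_L \cong \prod_i A_{K_i}\times\Z^k$ with each $A_{K_i}$ directly irreducible, there is no graph-of-groups decomposition of $A_L$ with proper parabolic vertex groups. The paper disposes of this case by a separate argument: when $k>0$, the infinite normal $\Z^k$ together with the type $F_\alpha$ quotient gives the rebuilding property via \Cref{prop:Crpprop}\eqref{prop:crpprop:2}; when $k=0$, one acts on the \emph{product} $X=\prod_i T_i$ of the trees obtained factorwise (Grushko tree if $A_{K_i}$ is freely reducible, Fioravanti splitting if it is freely and directly irreducible), which is a higher-dimensional $\phi$-invariant complex to which \Cref{thm:Crprebuild} still applies. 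Your proposal needs this branch.

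Third, your conjectured route to the splitting is misdirected. You suggest one needs a RAAG analogue of \Cref{Prop:existenceBStree} using polynomial growth to rule out ``fully irreducible'' dynamics. In fact no such lemma is required: when $A_L$ is freely and directly irreducible with $m>1$, Fioravanti's Proposition D produces the invariant amalgam for \emph{any} untwisted automorphism after passing to a power, with no growth hypothesis. Polynomial growth enters only where the inductive hypothesis or \Cref{Thm:crpfreeproduct} is invoked, not in the construction of the splitting.
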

\begin{proof}
We proceed by induction on $m$, the number of vertices of $L$.  When $m=1$ we have that $A_L$ is isomorphic to $\Z$.  In this case $\Gamma$ is virtually $\Z^2$ and so the result follows from \Cref{prop:Crpprop}. We now suppose $m>1$.  Note that if $K\subset L$ is a full subcomplex then any untwisted automorphism of $A_L$ preserving $A_K$ restricts to an untwisted automorphism of $A_K$.  To prove the inductive step there are three cases to consider.
    
    \paragraph{\underline{\textbf{Case 1:}}} \emph{$A_L$ is freely reducible.}
    In this case $A_L$ admits a Grushko splitting $A_{K_1}\ast\dots\ast A_{K_k}\ast F_n$ where each $K_i$ and $[n]$ is a full subcomplex of $L$. In particular, each  subcomplex $K_i$ contains at least one vertex but strictly less than $m$ vertices. We replace $\phi$ by a sufficiently high power which preserves the conjugacy class of every factor $A_{K_i}$. By the inductive hypothesis each $A_{K_i}\rtimes_{\phi|_{A_{K_i}}}\Z$ has the cheap $\alpha$-rebuilding property for all $\alpha \in \NN$.  The conclusion follows from \Cref{Thm:crpfreeproduct}.\hfill$\blackdiamond$
    
    \paragraph{\underline{\textbf{Case 2:}}} \emph{$A_L$ is freely and directly irreducible.}
    By \cite[Proposition~D]{Fioravanti22} the group $A_L$ splits as an amalgamated free product $A_J\ast_{A_{J\cap K}}A_K$ with each $J,K,J\cap K\subset L$ non-empty and such that, possibly after replacing $\phi$ by a high enough power, the corresponding Bass--Serre tree $T$ is $\phi$-invariant. Moreover, for $X\in\{J,K,J\cap K\}$ we see $\phi(A_X)=A_X$ (see \cite[Lemma~5.3]{Fioravanti22}).  It follows $\Gamma$ acts on $T$ with stabilisers conjugate to $A_X\rtimes_{\phi|_{A_X}}\Z$.  By the inductive hypothesis, the subgroups $A_X\rtimes_{\phi|_{A_X}}\Z$ satisfy the cheap $\alpha$-rebuilding property for all $\alpha\in\NN$.  The conclusion follows from \Cref{thm:Crprebuild}.\hfill$\blackdiamond$
    
     \paragraph{\underline{\textbf{Case 3:}}} \emph{$A_L$ is directly reducible.}
    
    Now, $A_L$ splits as a direct product $\prod_{i} A_{K_i} \times \Z^k$ for some $k \geq 0$, where each $A_{K_i}$ is non cyclic and directly irreducible. If $k > 0$ then since $\prod_{i} A_{K_i}$ is of type $F_{\alpha}$ for every $\alpha \in \NN$, we have that $A_L$ has the cheap $\alpha$-rebuilding property for every $\alpha$ by \cref{prop:Crpprop}~\eqref{prop:crpprop:2}. Another application of \cref{prop:Crpprop}~\eqref{prop:crpprop:2} gives us that the mapping torus $A_L \rtimes_{\phi} \Z$ has the cheap $\alpha$-rebuilding property for every $\alpha \in \NN$ and $\phi \in \mathrm{Aut}(A_L)$.
    
    Hence, we may assume that $k = 0$. Thus, $A_L$ acts on the product $X=\prod_i T_i$ where each $T_i$ is a tree which arises from \Cref{Prop:existenceBStree} if $A_{K_i}$ is freely reducible, and from the amalgamated product splitting given by \cite[Proposition~D]{Fioravanti22} as in Case 2 when $A_{K_i}$ is both directly and freely irreducible.
    
    As in the previous two cases, up to passing to a power we may assume that $\phi$ fixes the quotient $A_L \backslash X$ pointwise, and that $\phi|_{A_{K_i}}$ preserves the stabilisers of $A_{K_i}$ acting on $T_i$ for each $i$.  Since the stabilisers of $A_L$ acting on $X$ are products of the stabilisers of the $A_{K_i}$ acting on $T_i$ we see that $\phi(\Stab_{A_L}(\sigma))=\Stab_{A_L}(\sigma)$ for each cell $\sigma\in X$.  Thus, $A_L\rtimes_\phi\Z$ acts on $X$ with the stabiliser of a cell $\sigma$ isomorphic to $A_{J_\sigma}\rtimes_{\phi|_{A_{J_\sigma}}}\Z$ for some RAAG $A_{J_\sigma}$ where $J_\sigma$ is non-empty and has strictly less vertices than $L$.  
    The conclusion follows from \Cref{thm:Crprebuild}.\hfill$\blackdiamond$
    
    This completes the proof of the inductive step and the theorem.
\end{proof}

\begin{thm}\label{thm:RACGs:rebuild}
    Let $L$ be a flag complex on $[m]$ and let $\Gamma=W_L\rtimes_\phi \Z$.  If $\phi$ is polynomially growing, then $\Gamma$ has the cheap $\alpha$-rebuilding property for all $\alpha$.
\end{thm}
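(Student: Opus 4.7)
The plan is to mirror the proof of \Cref{thm:RAAGs:rebuild} by induction on the number of vertices $m$ of $L$. The key simplification in the Coxeter setting is that, by \cite{Fioravanti22}, every automorphism of a right-angled Coxeter group preserves the standard coarse median structure, so no untwistedness hypothesis on $\phi$ is required. Moreover $W_L$ is virtually torsion-free and hence residually finite, so \Cref{Thm:crpfreeproduct} applies. The base case $m = 1$ is immediate: $W_L \cong \ZZ/2$ has trivial automorphism group, so $\Gamma \cong \ZZ/2 \times \ZZ$ is virtually $\ZZ$ and the cheap $\alpha$-rebuilding property follows from \Cref{prop:Crpprop}~\eqref{prop:Crpprop:1}~\eqref{prop:Crpprop:3}.

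For the inductive step I would distinguish the same three cases as in the proof of \Cref{thm:RAAGs:rebuild}. If $W_L$ is freely reducible, then $L$ is disconnected and $W_L \cong W_{L_1} \ast \cdots \ast W_{L_k}$ with each $L_i$ a non-empty full subcomplex on strictly fewer vertices than $L$. Replacing $\phi$ by a power preserving the conjugacy class of each $W_{L_i}$, the inductive hypothesis together with \Cref{Thm:crpfreeproduct} yields the conclusion. If $W_L$ is freely and directly irreducible, I would use the right-angled Coxeter version of \cite[Proposition~D]{Fioravanti22} to produce an amalgamated splitting $W_L \cong W_J \ast_{W_{J \cap K}} W_K$ with each of $J$, $K$, $J \cap K$ a non-empty full proper subcomplex of $L$, preserved (after passing to a power) by $\phi$. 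The inductive hypothesis and \Cref{thm:Crprebuild} applied to the Bass--Serre tree then finish this case.

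The directly reducible case is where the Coxeter argument diverges most from the right-angled Artin one, since the abelian direct factors of $W_L$ are \emph{finite} copies of $\ZZ/2$, so \Cref{prop:Crpprop}~\eqref{prop:crpprop:2} cannot be invoked as in the analogous case of \Cref{thm:RAAGs:rebuild}. Write $L = L_{\mathrm{inf}} \ast L_{\mathrm{fin}}$ as a join, where $L_{\mathrm{fin}}$ is the maximal simplex such that $W_{L_{\mathrm{inf}}}$ has no finite direct factor; correspondingly $W_L \cong W_{L_{\mathrm{inf}}} \times F$ with $F \cong (\ZZ/2)^{|L_{\mathrm{fin}}^{(0)}|}$ finite. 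If $L_{\mathrm{fin}}$ is non-empty, after passing to a further power of $\phi$ the automorphism preserves both factors of the decomposition and acts trivially on $F$, so $\Gamma$ has a finite-index subgroup isomorphic to $(W_{L_{\mathrm{inf}}} \rtimes_{\phi|_{W_{L_{\mathrm{inf}}}}} \ZZ) \times F$. Applying the inductive hypothesis (or noting that $\Gamma$ is virtually $\ZZ$ when $L_{\mathrm{inf}}$ is empty) and \Cref{prop:Crpprop}~\eqref{prop:Crpprop:1} finishes this subcase. Otherwise $F$ is trivial and $W_L \cong W_{K_1} \times \cdots \times W_{K_r}$ with each $W_{K_i}$ infinite and directly irreducible on a proper full subcomplex of $L$. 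After passing to a power preserving each factor, Cases~1 or~2 provide a $\phi|_{W_{K_i}}$-invariant simplicial tree $T_i$ with proper cell stabilisers; then $\Gamma$ acts cocompactly on the product $X = \prod_i T_i$ with every cell stabiliser of the form $W_{J_\sigma} \rtimes_{\phi|_{W_{J_\sigma}}} \ZZ$ for some full subcomplex $J_\sigma \subsetneq L$, and \Cref{thm:Crprebuild} finishes the proof. The main obstacle will be the clean treatment of the finite direct factor, which has no analogue in the right-angled Artin argument.
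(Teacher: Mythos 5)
Your proposal follows essentially the same route as the paper: induct on the number of vertices, reuse the three cases of \Cref{thm:RAAGs:rebuild}, and substitute the Coxeter versions of Fioravanti's splitting results. Two remarks.

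First, your justification for dropping the untwisted hypothesis is not quite right as stated. It is \emph{not} the case that every automorphism of $W_L$ preserves the standard coarse median structure; what is true (and what the paper uses, citing Sale--Susse) is that the untwisted subgroup $U(W_L)$ has \emph{finite index} in $\Aut(W_L)$. One therefore passes to a power of $\phi$ lying in $U(W_L)$, which is exactly the move you also need before invoking \Cref{prop:Crpprop}~\eqref{prop:Crpprop:1} elsewhere, so the error is easily repaired and does not propagate.

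Second, your careful handling of the directly reducible case is a genuine improvement in exposition over the paper's terse claim that ``the three cases are now identical.'' You are right that the RAAG argument invokes \Cref{prop:Crpprop}~\eqref{prop:crpprop:2} with the infinite normal subgroup $\ZZ^k$, which has no analogue when the abelian direct factor of $W_L$ is the finite group $(\ZZ/2)^k$. Your fix --- pass to a power of $\phi$ acting trivially on the finite factor, split off $(\ZZ/2)^k$ as a direct factor of the mapping torus, and apply the inductive hypothesis plus commensurability invariance --- is correct and is presumably what the paper has in mind when it gestures at the remarks in \cite{Fioravanti22}. Overall the proposal is sound and matches the paper's strategy, with the caveat about the coarse-median claim noted above.
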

\begin{proof}
   The proof is entirely analogous to \Cref{thm:RAAGs:rebuild} with the following modifications. First, we note that the subgroup of untwisted automorphisms of $W_L$ has finite index in the automorphism group $\Aut(W_L)$ by \cite{SaleSusse2019}. Thus, we may pass to a power of $\phi$ which is untwisted.
   
   In the case where $m=1$ we have that $\Gamma=\Z/2\times \Z$ which has the cheap $\alpha$-rebuilding property for all $\alpha\in\NN$ by \Cref{prop:Crpprop}. The three cases are now identical, taking into account the remarks after Theorem~E and at the start of Section~5 of \cite{Fioravanti22}, since the results we use for $A_L$ also hold for $W_L$.
\end{proof}

\begin{remark}
    We can actually say more regarding the cheap $\alpha$-rebuilding property for mapping tori of automorphisms of RAAGs.  Indeed, if $L$ is $(\alpha-1)$-connected, then $A_L$ has the cheap $\alpha$-rebuilding property \cite[Theorem~I]{AbertBergeronFraczykGaboriau2021}.  
In particular, if $L$ is contractible then for any automorphism $\phi$ of $A_L$, the group $A_L\rtimes_\phi\Z$ has the cheap $\alpha$-rebuilding property for all $\alpha$. 
\end{remark} 


\bibliographystyle{alpha}
\bibliography{refs.bib}

\end{document}